\newtheorem{theorem}{Theorem}[section]
\newtheorem*{thmA}{Theorem~A}
\newtheorem*{thmB}{Theorem~B}
\newtheorem{lemma}[theorem]{Lemma}
\newtheorem{question}[theorem]{Question}
\newtheorem{conjecture}[theorem]{Conjecture}
\def\cent#1#2{\mathrm{C}_{{#1}}{{(#2)}}}
\def\irr#1{\mathrm{Irr}(#1)}
\def\cl#1{\mathrm{Cl}(#1)}
\def\V#1{\mathrm{V}(#1)}
\begin{document}
	
	\title[Diameter three]{On the structure of the character degree graphs having diameter three}

	\author[S.~Dolfi]{Silvio Dolfi}
	\address{Silvio Dolfi, Dipartimento di Matematica e Informatica U. Dini
Universit\`a degli Studi di Firenze, viale Morgagni 67/a, 50134 Firenze, Italy}
	\email{silvio.dolfi@unifi.it}

	\author[R.~Hafezieh]{Roghayeh Hafezieh}
	\address{Roghayeh Hafezieh, Department of Mathematics, Gebze Technical University, P.O. Box 41400, Gebze, Turkey}
	\email{roghayeh@gtu.edu.tr}

	\author[P. Spiga]{Pablo Spiga}
	\address{Pablo Spiga, Dipartimento di Matematica Pura e Applicata,\newline
		University of Milano-Bicocca, Via Cozzi 55, 20126 Milano, Italy}
	\email{pablo.spiga@unimib.it}
	\subjclass[2010]{primary 20C15}
	\keywords{character degree graph, solvable groups.}
	\thanks{The first and the third authors are members of the GNSAGA INdAM and PRIN ``Group theory and its applications'' research group and kindly acknowledge their support.\\
	*Corresponding author: Pablo Spiga, pablo.spiga@unimib.it}
	\maketitle

        \begin{abstract}
          The structure of the character degree graphs $\Delta(G)$, i.e. the prime graphs on the set $\mathrm{cd}(G)$ of the irreducible character degrees
          of a finite group $G$, such that $G$ is  solvable  and $\Delta(G)$ has diameter three, remains an intriguing area of study. However, a comprehensive understanding of these structures remains elusive. In this paper, we prove some properties and provide an infinite series of examples of this class of graphs, building on the ideas in~\cite{lewis1}.
	\end{abstract}

\section{Introduction}\label{sec:intro}
Consider a finite group $G$, and let $\mathrm{Irr}(G)$ be the collection of all irreducible complex characters of $G$. We define $\mathrm{cd}(G)$ as the set containing the degrees of these characters, denoted by $\chi(1)$, where $\chi$ belongs to $\mathrm{Irr}(G)$. Consequently, the \emph{character degree graph} $\Delta(G)$ is established as a graph with its vertex set $\V G$, which comprises all prime numbers dividing some degree $\chi(1)$ in $\mathrm{cd}(G)$. In this graph, two distinct primes, $p$ and $q$, are deemed adjacent if their product $pq$ divides any degree within $\mathrm{cd}(G)$.

The exploration of the character degree graph $\Delta(G)$ and the correlations between its properties and the structural characteristics of the group $G$ constitute a well-explored area with an extensive body of literature. For a comprehensive overview of this subject, we recommend consulting the survey paper~\cite{lewis2}. The objective of this paper is to contribute to a specific facet of this research endeavor: the structure of $\Delta(G)$, when $G$ is a solvable group and  $\Delta(G)$ has diameter three.

A pivotal result established by P.~P.~Palfy~\cite{palfy1} asserts that, if $G$ is a solvable group then, for any three distinct primes in the set $\V G$, at least two of them form an adjacent pair in the character degree graph $\Delta(G)$. Consequently, it promptly follows that, for a solvable group $G$, $\Delta(G)$ can exhibit at most two connected components, which are complete if $\Delta(G)$ is not connected. Additionally, in the case of a connected $\Delta(G)$, the diameter of $\Delta(G)$ is restricted to a maximum of $3$; the validity of this inequality in any finite group is demonstrated in~\cite{lewis3}.
Moreover, Palfy proved in~\cite{palfy2} that there is a remarkable difference between the sizes of the two connected components
of a disconnected  character degree graph of a solvable group. Namely,  if $m$ and $n$ are their sizes,  say $n \geq m$, then  $n \geq 2^m -1$.

For a considerable time, it remained an open question whether solvable groups with character degree graphs of diameter $3$ existed. This uncertainty persisted until Mark Lewis resolved the matter in~\cite{lewis1}, presenting a beautiful solvable group $G$ with $\Delta(G)$ comprising $6$ vertices and possessing a diameter of $3$, see Figure~\ref{figure2}.

The structure of the solvable groups $G$ such that the character degree graph $\Delta(G)$ has diameter three, as well as some
properties of the graph $\Delta(G)$, have been further studied in~\cite{CDPS} and~\cite{sass}.
In particular, it turns out that in this case the group $G$ has a unique non-abelian Sylow subgroup $P$ and that $\Delta(G/\gamma_3(P))$,
where $\gamma_3(P) = [P', P]$ is the third term of the descending central series of $P$, is a disconnected subgraph of $\Delta(G)$,
with the same vertex set. Let $\pi_0$ and $\pi_1$ denote the connected components of $\Delta(G/\gamma_3(P))$, with the notation chosen such that the prime divisor $p$ of $|P|$ belongs to $\pi_1$. In this context, it is established that $|\pi_1| \geq 2^{|\pi_0|}$ (\cite[Remark 4.4]{CDPS} and \cite[Theorem 4]{sass}).
So, the vertex set of  $\Delta(G)$ is covered by two cliques with vertex sets $\pi_0$ and $\pi_1$ of rather different sizes.

Let $\Delta = \Delta(G)$,  for $G$ solvable, be a graph of diameter three. Proceeding as in~\cite{sass},
we denote by $\alpha_{\Delta} \subseteq \pi_0$ the set of vertices of $\pi_0$ that are not adjacent to any vertex in $\pi_1$ and
by $\delta_{\Delta} \subseteq \pi_1$ the set of vertices of $\pi_1$ that are not adjacent to any vertex in $\pi_0$.
Since $\Delta$ has diameter $3$, every pair of vertices $s \in \alpha_{\Delta}$ and $t \in \delta_{\Delta}$ has distance three in $\Delta(G)$, while all the other
pairs of vertices have distance at most two, and the edges linking $\pi_0$ and $\pi_1$ have one neighbor in
$\beta_{\Delta} = \pi_0 \setminus \alpha_{\Delta}$ and the other in $\gamma_{\Delta} = \pi_1 \setminus \delta_{\Delta}$. See Figure~\ref{figure1}.

With respect to this notation, in the example $\Delta = \Delta(G)$ given in ~\cite{lewis1} we have $|\alpha_{\Delta}| = |\beta_{\Delta}| =|\delta_\Delta|= 1$ and
$|\gamma_{\Delta}| = 3$. See Figure~\ref{figure2}.

To the best of our knowledge, the example in~\cite{lewis1}  appears to be the only example of a character degree graph  of a solvable group, having diameter three,  published till now.
In Section~\ref{construction}, building on the ideas of~\cite{isaacs1} and~\cite{lewis1},  we  construct a class of examples of solvable groups that yield the following result.

\begin{figure}[!ht]
\begin{tikzpicture}
\fill (0,0) circle (2pt);
\fill (.5,0) circle (2pt);
\fill (1,0) circle (2pt);
\fill (-1,0) circle (2pt);
\fill (-.5,.5) circle (2pt);
\fill (-.5,-.5) circle (2pt);
\draw (0,0)--(0.5,0);
\draw (0.5,0)--(1,0);
\draw (-1,0)--(0,0);
\draw (-1,0)--(-.5,.5);
\draw (-1,0)--(-.5,-.5);
\draw (.5,0)--(-.5,.5);
\draw (.5,0)--(-.5,-.5);
\draw (0,0)--(-.5,.5);
\draw (0,0)--(-.5,-.5);
\draw (-.5,-.5)--(-.5,.5);
\end{tikzpicture}
\caption{Lewis' example}\label{figure2}
\end{figure}
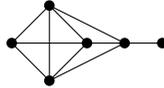

\begin{thmA}
  For every choice of positive integers $a, b$, there exists a solvable group $G$ such that $\Delta = \Delta(G)$ has diameter three 
  with $|\alpha_{\Delta}| =  a$ and  $|\delta_{\Delta}| \geq b$.
\end{thmA}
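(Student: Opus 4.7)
The plan is to construct, for each pair of positive integers $(a,b)$, a solvable group $G = P \rtimes H$ with the required character degree graph, generalising the construction of Lewis~\cite{lewis1} and using techniques from Isaacs~\cite{isaacs1}. The group $P$ will be a suitably chosen $p$-group of nilpotency class three (so $\gamma_3(P) \ne 1$), with $V := P/P'$ a vector space of large enough dimension over $\mathbb{F}_p$ (depending on $a,b$), on which a solvable subgroup $H \leq \mathrm{Aut}(P)$ acts, preserving the structure induced by commutation.

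By Clifford theory, the irreducible characters of $G$ partition into three families: (i) those trivial on $P$, contributing the degree set $\mathrm{cd}(H)$; (ii) those containing $P'$ but not $P$ in their kernel, contributing degrees of the form $|H:H_v|\,\psi(1)$ for $v \in V \setminus \{0\}$ and $\psi \in \irr{H_v}$; and (iii) those non-trivial on $P'$, further split according to whether they are trivial on $\gamma_3(P)$ or not, with degrees of the form $p^m \cdot \chi(1)$ for $\chi$ in the irreducible character set of a suitable stabilizer in $H$. According to the framework of~\cite{CDPS, sass}, families (i) and (ii) together with the $\gamma_3(P)$-trivial part of (iii) produce the disconnected graph $\Delta(G/\gamma_3(P))$ with components $\pi_0$ and $\pi_1 \ni p$, while the $\gamma_3(P)$-non-trivial part of (iii) supplies the extra edges between $\beta_\Delta$ and $\gamma_\Delta$ that bring $\Delta(G)$ to diameter three.

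The parameters are realised by engineering $H$ with two decoupled layers. One layer contributes $a$ primes $r_1, \ldots, r_a$ to $\alpha_\Delta$ through orbit lengths on $V$, while being kept out of $\mathrm{cd}(H)$ by demanding that these primes have normal abelian Sylow subgroups in $H$ (Ito--Michler); then $r_i \in \pi_0$, with no edge to $\pi_1$. The other layer contributes $b$ primes $s_1, \ldots, s_b$ to $\delta_\Delta$ through character degrees of the stabilizer in $H$ of a faithful character of $\gamma_3(P)$; this stabilizer is smaller than $H$ and can be designed with prime divisors not appearing elsewhere in the analysis. The main obstacle is verifying the cross-absences: no $r_i$ appears jointly with a $\pi_1$-prime in a single degree, and no $s_j$ appears jointly with a $\pi_0$-prime. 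This decoupling is secured by confining the $s_j$-contributions to the $\gamma_3(P)$-level of the character analysis, thereby insulating them from the orbit computation on $V$, and by confining the $r_i$-contributions to the $V$-level, thereby excluding them from $\mathrm{cd}(H)$ via Ito--Michler. Once the decouplings are checked and $\alpha_\Delta, \delta_\Delta$ are confirmed non-empty, Theorem~A follows; the specialisation $a = b = 1$ essentially recovers Lewis' example.
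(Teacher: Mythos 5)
Your overall framework (a class-three $p$-group $P$ acted on by a $p'$-group $H$, with the Clifford analysis stratified by $P/P'$, $P'/\gamma_3(P)$ and $\gamma_3(P)$, and with the $\gamma_3(P)$-level supplying the edges that force diameter three) is consistent with the paper's strategy, but the proposal has two genuine gaps.

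First, the mechanism you propose for producing $\alpha_\Delta$ cannot work. If $r_i$ has a normal abelian Sylow subgroup in $H$ (your Ito--Michler condition), that Sylow subgroup lies in $\mathrm{F}(H)$, and by the structure theorem of \cite{CDPS} that you yourself invoke, $\pi_1 = \{p\}\cup\pi(|\mathrm{F}(H)/A|)$ while $\pi_0 = \pi(|H/\mathrm{F}(H)|)$; so $r_i$ lands in $\pi_1$ (or fails to be a vertex at all, if its Sylow subgroup centralizes $P$), never in $\pi_0 \supseteq \alpha_\Delta$. Independently, a prime entering $\V G$ through an orbit length $|H:H_v|$ on $V=P/P'$ becomes adjacent to every prime of $|\mathrm{F}(H)/A|$, because $\mathrm{F}(H)/A$ acts semi-regularly on $\irr V\setminus\{1_V\}$ and hence its order divides every such orbit length; those primes lie in $\pi_1$, so such a prime cannot belong to $\alpha_\Delta$. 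In the actual construction the situation is the reverse of what you describe: the $\alpha$-primes are the prime divisors of $n=|H/\mathrm{F}(H)|$ (a Galois group), they \emph{do} divide degrees in $\mathrm{cd}(G/P)$, and their non-adjacency to $\pi_1$ is forced by the arithmetic condition $\gcd(n,p(p^n-1))=1$ rather than by an Ito--Michler exclusion.

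Second, nothing is actually constructed. The existence of a class-three $p$-group with uniform chief factors and a commutator pairing rigid enough to control the inertia groups at the $\gamma_3(P)$-level is the hard content; the paper obtains it from Isaacs' skew polynomial ring $F\{X\}/(X^4)$ \cite{isaacs1}, together with the explicit formula $[s,t]=1+(ab^p-a^{p^2}b)x^3$ that governs all the stabilizer computations. Likewise, ``can be designed with prime divisors not appearing elsewhere'' conceals a nontrivial number-theoretic existence statement: one must produce $p$ and $n$ with a prescribed number of prime divisors of $n$, at least $b$ prime divisors of $p^2+p+1$, and $\gcd(n,p(p^n-1))=1$ simultaneously, which the paper proves via Dirichlet's theorem (Lemma~\ref{number theory3}). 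Without a concrete pair $(P,H)$ and this arithmetic input, the decoupling claims remain assertions and the plan does not yield a proof.
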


We prove also

\begin{thmB}
  Let $\Delta = \Delta(G)$  for a solvable group $G$ and assume that $\Delta$ has diameter three.
  Then
  $$|\gamma_{\Delta}| \geq 2^{|\beta_{\Delta}|}\left( 2^{|\alpha_{\Delta}|} -1  \right) + 1 .$$
\end{thmB}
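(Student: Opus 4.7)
Set $a=|\alpha_\Delta|$, $b=|\beta_\Delta|$, $c=|\gamma_\Delta|$, $d=|\delta_\Delta|$. The cited bound reads $c+d=|\pi_1|\geq 2^{a+b}$, so the desired inequality $c\geq 2^b(2^a-1)+1 = 2^{a+b}-2^b+1$ is equivalent to the upper bound $d\leq 2^b-1$. The plan is therefore to sharpen the proof of $|\pi_1|\geq 2^{|\pi_0|}$ from \cite{CDPS,sass} in order to locate the primes of $\pi_1$ within $\gamma_\Delta$ versus $\delta_\Delta$, rather than to use that bound as a black box.

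That underlying argument produces, for each subset $S\subseteq\pi_0$, a distinct prime $r_S\in\pi_1$ arising as a divisor of some irreducible character degree of $G$ whose other $\pi_0$-prime divisors are precisely the elements of $S$. Tracing through the argument should yield the refined statement that $r_S$ is adjacent in $\Delta(G)$ to every prime of $S$. Hence, as soon as $S\cap\alpha_\Delta\neq\emptyset$, one has $r_S\in\gamma_\Delta$; the subsets whose associated primes could lie in $\delta_\Delta$ are confined to $S\subseteq\beta_\Delta$, leaving at most $2^b$ candidates. To improve the count to $2^b-1$, I plan to identify $r_\emptyset$ with the prime $p$ (the divisor of the unique non-abelian Sylow subgroup $P$) and to show $p\in\gamma_\Delta$: this should follow from the structural results in \cite{CDPS,sass}, which tie $p$ to the primes of $\beta_\Delta$ via the action of $P$ on $P/\gamma_3(P)$, together with the fact that a non-abelian $P$ must contribute characters of degree divisible by $p$ and a prime of $\beta_\Delta$.

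The principal technical obstacle is to open the black box of \cite{CDPS,sass} and extract this refined correspondence $S\mapsto r_S$, together with the adjacency property of $r_S$ and the identification $r_\emptyset=p$. Both ingredients appear close to what is proven there but must be tracked carefully through the orbit calculations for the action of a Hall $\pi_0$-subgroup on the relevant sections of $P$, and through the verification that $p$ does indeed carry neighbors in $\beta_\Delta$ under the diameter-three hypothesis. Granted these ingredients, the counting argument above immediately yields $d\leq 2^b-1$, and hence $c\geq 2^{a+b}-2^b+1 = 2^b(2^a-1)+1$.
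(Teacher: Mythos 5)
Your reduction is built on a claim that is false: you assert that the desired inequality ``is equivalent to the upper bound $d\leq 2^b-1$'', i.e.\ $|\delta_\Delta|\leq 2^{|\beta_\Delta|}-1$, and your whole plan is to prove that upper bound. But the paper's own Theorem~A produces, for every positive integer $b$, solvable groups with $\Delta(G)$ of diameter three, $|\beta_\Delta|=1$ and $|\delta_\Delta|\geq b$ (in those examples $\beta_\Delta=\{3\}$ and $\delta_\Delta=\pi(p^2+p+1)$, which is made arbitrarily large). So $|\delta_\Delta|$ is not bounded in terms of $|\beta_\Delta|$ at all, and no amount of refinement of the correspondence $S\mapsto r_S$ can establish $d\leq 2^b-1$. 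There is also a structural problem with the mechanism you propose even before this: the argument behind $|\pi_1|\geq 2^{|\pi_0|}$ produces $2^{|\pi_0|}$ \emph{distinct} primes in $\pi_1$, i.e.\ a lower bound; it does not classify the elements of $\pi_1$. Primes of $\delta_\Delta$ need not occur among the $r_S$, so locating the $r_S$ inside $\gamma_\Delta$ versus $\delta_\Delta$ can never yield an upper bound on $|\delta_\Delta|$.

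The paper's proof goes in the opposite direction: it bounds $|\gamma_\Delta|$ from below directly, without using $|\pi_1|\geq 2^{|\pi_0|}$ as an intermediate step. Writing $n_0=|H/\mathrm{F}(H)|$, $q=p^{n/n_0}$ and $n_0=n_{\alpha}n_{\beta}$, one takes a central chief factor $M$ of $P$ and shows, using the fact that $p$ is adjacent to no vertex of $\alpha_\Delta$ together with the semilinear structure of $H$ on $M$ and \cite[Lemma~3.5]{CDPS}, that every degree in $\irr{G|M}$ witnessing an edge from a prime $t\in\beta_\Delta$ into $\pi_1$ is divisible by $p\,m_1 t$, where $m_1=(q^{n_0}-1)/(q^{n_{\beta}}-1)$. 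Hence $\{p\}\cup\pi(m_1)\subseteq\gamma_\Delta$, and the cyclotomic-polynomial count of Lemma~\ref{lemma:nt} gives $|\pi(m_1)|\geq d(n_0)-d(n_{\beta})\geq 2^{|\beta_\Delta|}(2^{|\alpha_\Delta|}-1)$; adding the prime $p$ yields the stated bound. Your instinct that $p\in\gamma_\Delta$ is correct and is indeed part of the paper's argument, but it enters as one extra element of an explicitly constructed subset of $\gamma_\Delta$, not as a way of shrinking $\delta_\Delta$.
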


We believe that $|\beta_\Delta|$ is related to the nilpotency class of $P$.
\begin{question}\label{q111}
Let $G$ be a solvable group such that $\Delta(G)$ has diameter $3$.  Is it true that $|\beta_\Delta|$ is at most the number of primes less than then nilpotency class of the non-abelian normal Sylow subgroup of $G$?
\end{question}

Actually, we are not aware of any examples with $|\beta_{\Delta}| > 1$. We remark that, for all the character graphs $\Delta = \Delta(G)$ of the solvable groups in Section~\ref{construction}, and then in Theorem~A,
as well as in Lewis' example~\cite{lewis1}, the set $\beta_{\Delta}$ consists of a single vertex, namely the prime $3$,  which is then a \emph{cut-vertex}
of $\Delta$.

\begin{question}\label{q1}
  Are there solvable groups $G$ such that $\Delta = \Delta(G)$ has diameter three and $|\beta_{\Delta}| \geq 2$?
\end{question}

The character degree graphs possessing  a cut-vertex have been studied in~\cite{HHHI} and \cite{LM} for solvable groups, and they
have been fully described for non-solvable groups in~\cite{DPSS}.
In particular, when $G$ is non-solvable, $\Delta(G)$ has a cut-vertex and diameter three if and only if
$G = J_1 \times A$, where $J_1$ is the first Janko group and  $A$ is an abelian group (\cite[Theorem~A(d)]{DPSS}).

Question~\ref{q1} is quite challenging at present, as our existing methods do not appear sufficiently effective. Furthermore, our current intuition is  heavily biased towards the known natural sources of examples. In light of this, we propose the following alternative, possibly more manageable question.
\begin{question}\label{q1a}
  Let $\Delta = \Delta(G)$ for a solvable group $G$ with $\Delta$ having diameter $3$. Is $\lim_{|\pi_0|\to\infty}\frac{|\alpha_\Delta|}{|\pi_0|}=1$?
\end{question}

Inspired by our work, we also pose the following.
\begin{question}\label{q1b}
  Let $\Delta = \Delta(G)$ for a solvable group $G$ with $\Delta$ having diameter $3$. Is $\lim_{|\pi_1|\to\infty}\frac{|\delta_\Delta|}{|\pi_1|}=0$?
\end{question}

In Theorem~\ref{primesets} we prove that for a character degree graph $\Delta = \Delta(G)$ of diameter three, where $G$ is a solvable group,
the union of the set $\beta_{\Delta}$ with a relatively large portion of the set $\gamma_{\Delta}$ induces a complete subgraph of $\Delta$. We leave open the following.

\begin{question}\label{q2}
  Let $\Delta = \Delta(G)$ for a solvable group $G$. If $\Delta$ has diameter three, then is the subgraph induced on
$\beta_{\Delta} \cup \gamma_{\Delta}$ a complete graph?
\end{question}

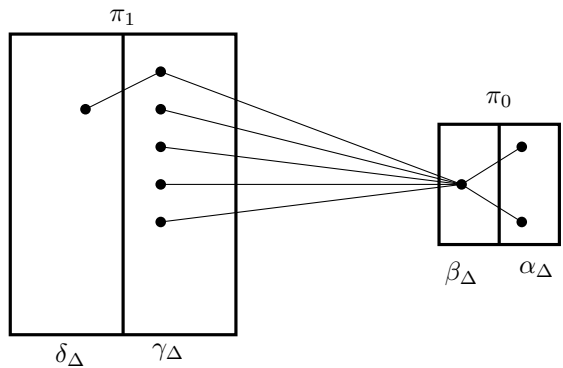
\begin{figure}[!ht]
\begin{tikzpicture}
\node[above] at (-2.5,2){$\pi_1$};
\node[below] at (-3.2,-2){$\delta_\Delta$};
\node[below] at (-1.9,-2){$\gamma_\Delta$};
\node[below] at (3,-.9){$\alpha_\Delta$};
\node[below] at (2,-.9){$\beta_\Delta$};
\draw[very thick] (-4,-2) rectangle (-1,2);
\draw[very thick] (-2.5,-2)--(-2.5,2);
\draw[very thick] (1.7,-.8) rectangle (3.3,.8);
\draw[very thick] (2.5,-.8)--(2.5,.8);
\node[above] at (2.5,.9){$\pi_0$};
\fill (2,0) circle (2pt);
\fill (2.8,.5) circle (2pt);
\fill (2.8,-.5) circle (2pt);
\fill (-3,1) circle (2pt);
\fill (-2,1.5) circle (2pt);
\fill (-2,1) circle (2pt);
\fill (-2,.5) circle (2pt);
\fill (-2,0) circle (2pt);
\fill (-2,-.5) circle (2pt);

\node[above] at (2,0){};
\draw (2,0)--(-2,1.5);
\draw (2,0)--(-2,1);
\draw (2,0)--(-2,.5);
\draw (2,0)--(-2,0);
\draw (2,0)--(-2,-.5);
\draw (2,0)--(2.8,.5);
\draw (2,0)--(2.8,-.5);
\draw (-3,1)--(-2,1.5);
\node[right] at (2.8,.5){};
\node[right] at (2.8,-.5){};
\end{tikzpicture}
\caption{An auxiliary picture for $\Delta(G)$ having diameter $3$}\label{figure1}
\end{figure}

\section{Preliminaries}\label{preliminaries}
This section serves as a compilation of notation and fundamental results employed consistently throughout the paper, often applied without explicit reference.

Given an action of a group $A$ on a set  $X$, we say that $X$ is an \emph{$A$-set}.
The action of $A$ on $X$ is \emph{semi-regular} if all $A$-orbits in $X$ have cardinality $|A|$, i.e. are regular orbits.

If $A$ acts by automorphisms on a group $G$ and $A$ acts semi-regularly on  $G \setminus \{1\}$, we say that
$A$ acts \emph{fixed-point-freely} on $G$.

Two $A$-sets $X$ and $X_1$ are said to be
\emph{isomorphic} if there exists a bijection $f:X \to X_1$ that commutes with the action of $A$.
If this happens when $A$ acts by automorphisms on the groups $X$ and $X_1$, we say that $X$ and $X_1$  are isomorphic $A$-groups.

We denote by $\cl G$ the set of the conjugacy classes of the group $G$.  For $g \in G$, we write $g^G$ for the $G$-conjugacy class  of $g$.
If a group $A$ acts via automorphisms on a group $G$, then $A$ acts naturally on $\irr G$ and on $\cl G$ as follows:
for $a \in A$, $g^G \in \cl G$, $\chi \in \irr G$, $x \in G$:
$$ \chi^a(x) = \chi(x^{a^{-1}}) \qquad \text{and} \qquad (g^G)^a = (g^a)^G . $$

We denote by $I_A(\chi) = \{ a\in A\mid \chi^a = \chi\}$ the \emph{inertia subgroup} of $\chi$ in $A$, i.e. the stabilizer
of $\chi$ under the action of $A$ on $\irr G$.
The following facts will be used repeatedly.

\begin{lemma}[\mbox{\cite[Theorem 13.24]{isaacs}}] \label{iso}
  Let $A$ be a solvable group acting by automorphisms on a group $G$.
  If $|A|$ is coprime to $|G|$, then $\irr G$ and $\cl G$ are isomorphic $A$-sets.
\end{lemma}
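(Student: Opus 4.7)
The plan is to avoid constructing a bijection by hand and instead reduce the statement to comparing fixed-point counts. A standard consequence of Burnside's table of marks is that two finite $A$-sets $X$ and $Y$ are isomorphic if and only if $|X^B| = |Y^B|$ for every subgroup $B \leq A$ (equivalently, for every conjugacy class of subgroups of $A$). Thus it suffices to show
\[
|\irr{G}^B| = |\cl{G}^B|
\]
for each $B \leq A$.

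Fix such a $B$. Since $A$ is solvable and $|A|$ is coprime to $|G|$, the same holds for $|B|$, so $B$ acts on $G$ via a coprime, solvable action. Glauberman's correspondence then furnishes a canonical bijection between $\irr{G}^B$ and $\irr{\cent{G}{B}}$, giving $|\irr{G}^B| = |\irr{\cent{G}{B}}|$.

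For the class-side analogue, let $K = g^G$ be a $B$-invariant conjugacy class of $G$. Applying Glauberman's lemma to the transitive $G$-action on $K$, with $B$ acting compatibly, produces a $B$-fixed point in $K$, i.e.\ an element of $\cent{G}{B} \cap K$. A routine verification shows that the assignment $g_0^{\cent{G}{B}} \mapsto g_0^G$ is a well-defined bijection $\cl{\cent{G}{B}} \to \cl{G}^B$, so $|\cl{G}^B| = |\cl{\cent{G}{B}}|$. Combining with the trivial identity $|\irr{H}| = |\cl{H}|$ applied to $H = \cent{G}{B}$ yields the required equality, and Burnside's principle completes the argument.

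The main obstacle is the appeal to Glauberman's correspondence itself, which is a genuinely deep theorem whose original proof in the solvable case is substantial; the class-side analogue is comparatively elementary and rests only on Glauberman's lemma. A conceivable alternative avoiding the correspondence would be an induction on $|A|$: take $N \triangleleft A$ minimal normal (necessarily elementary abelian by solvability, and of order coprime to $|G|$), use \emph{only} Glauberman's lemma to reduce the $N$-fixed characters and classes to $\cent{G}{N}$, apply the inductive hypothesis to $A/N$ acting on $\cent{G}{N}$, and then match the remaining non-$N$-fixed $A$-orbits via Brauer's permutation lemma together with the elementary abelian structure of $N$. In that route, the non-fixed orbit matching is the step I would expect to be the most delicate.
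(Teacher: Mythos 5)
Your argument is correct and coincides with the proof in the source the paper cites: the paper gives no proof of its own, quoting this as \cite[Theorem 13.24]{isaacs}, and Isaacs' argument there is exactly yours --- reduce via the fixed-point criterion for isomorphism of $A$-sets (his Lemma 13.23) to showing $|\irr{G}^B| = |\irr{\cent{G}{B}}| = |\cl{\cent{G}{B}}| = |\cl{G}^B|$ for each $B \leq A$, the first equality coming from the Glauberman correspondence and the last from Glauberman's lemma. So there is nothing to add beyond noting that the appeal to the full Glauberman correspondence is unavoidable here and is precisely why the lemma carries the solvability hypothesis on $A$.
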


If $M$ is an abelian group, we denote by $\widehat M$ the \emph{dual group} of $M$, which is the set $\irr M$ with the natural product operation.
If $M$ is an abelian group such that $A$ acts by automorphisms and   such that $|A|$ is coprime to $|M|$, then as a particular case
of Lemma~\ref{iso} we have that $M$ and $\widehat{M}$ are isomorphic $A$-groups.

Given a normal subgroup $N$  of a group $G$, $G$ acts naturally by conjugation on $\irr N$ and $\cl N$.
For $\theta \in \irr N$, we use the following notation:
$$\irr{G|\theta} = \{ \chi \in \irr G | [\chi_N, \theta] \neq 0 \}$$
and, for a subset $Y \subseteq \irr N$,
$$\irr{G|Y} = \bigcup_{ \theta \in Y} \irr{G|\theta}.$$
Moreover, we write $\irr{G|N} =  \irr{G|\irr N \setminus \{1_N\}}$;
so, $\irr G = \irr{G/N} \dot\cup \irr{G|N}$.
\begin{lemma}\label{lemma1}
  Let $N$ be a normal subgroup of the group $G$ and $\theta, \theta_1 \in \irr N$. Then
  $\irr{G|\theta} = \irr{G|\theta_1}$
  if and only if $\theta$ and $\theta_1$ lie in the same $G$-orbit in $\irr N$.

  If $\mathcal{X}$ is a set of representatives for the non-empty intersections of the $G$-orbits in $\irr N$ with $X$, where $X$ is a subset of $\irr N$, then
  $$\irr{N|X} = \bigcup_{\theta \in \mathcal{X}}\irr{N|\theta}$$
  is a disjoint union.
 
\end{lemma}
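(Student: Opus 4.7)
The plan is to deduce both parts from Clifford's theorem, which states that for a normal subgroup $N$ of $G$ and an irreducible character $\chi \in \mathrm{Irr}(G)$, the restriction $\chi_N$ is a positive integer multiple of the sum of the characters in a single $G$-orbit on $\mathrm{Irr}(N)$. In particular, $\chi \in \mathrm{Irr}(G \mid \theta)$ if and only if $\theta$ is a constituent of $\chi_N$ if and only if every character in the $G$-orbit of $\theta$ is a constituent of $\chi_N$.

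For the first assertion, I would prove the forward and backward implications separately. Suppose first that $\theta$ and $\theta_1$ lie in the same $G$-orbit, say $\theta_1 = \theta^g$. If $\chi \in \mathrm{Irr}(G \mid \theta)$, then by Clifford's theorem the constituents of $\chi_N$ form a single $G$-orbit containing $\theta$, hence also $\theta_1$, so $\chi \in \mathrm{Irr}(G \mid \theta_1)$; the reverse inclusion is identical. Conversely, suppose $\mathrm{Irr}(G \mid \theta) = \mathrm{Irr}(G \mid \theta_1)$. The set $\mathrm{Irr}(G \mid \theta)$ is non-empty: if $\psi$ is any irreducible constituent of the induced character $\theta^G$, then Frobenius reciprocity gives $\psi \in \mathrm{Irr}(G \mid \theta)$. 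Pick any such $\chi$; then $\chi$ also lies in $\mathrm{Irr}(G \mid \theta_1)$, so both $\theta$ and $\theta_1$ are irreducible constituents of $\chi_N$. By Clifford's theorem these constituents form a single $G$-orbit, and hence $\theta$ and $\theta_1$ lie in the same $G$-orbit.

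For the second assertion, the inclusion $\bigcup_{\theta \in \mathcal X} \mathrm{Irr}(G \mid \theta) \subseteq \mathrm{Irr}(G \mid X)$ is immediate from $\mathcal X \subseteq X$. For the reverse inclusion, take $\chi \in \mathrm{Irr}(G \mid X)$; by definition there exists $\theta' \in X$ with $\chi \in \mathrm{Irr}(G \mid \theta')$. The $G$-orbit of $\theta'$ meets $X$, so by the choice of $\mathcal X$ there is a unique $\theta \in \mathcal X$ lying in the same $G$-orbit as $\theta'$; by the first part, $\chi \in \mathrm{Irr}(G \mid \theta)$. Finally, to see the union is disjoint, suppose $\chi \in \mathrm{Irr}(G \mid \theta) \cap \mathrm{Irr}(G \mid \theta_1)$ for some $\theta, \theta_1 \in \mathcal X$. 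Then by the first part $\theta$ and $\theta_1$ lie in the same $G$-orbit, and since $\mathcal X$ contains at most one representative per orbit, $\theta = \theta_1$.

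There is no real obstacle here: the whole statement is a bookkeeping consequence of Clifford's theorem together with the existence of an irreducible constituent of any induced character. The only mild caveat is that the statement as displayed in the excerpt writes $\mathrm{Irr}(N \mid X)$ on the left-hand side, which must be read as the evident typo for $\mathrm{Irr}(G \mid X)$ in order for the identity to make sense.
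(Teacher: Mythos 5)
Your proof is correct and follows essentially the same route as the paper's: both directions rest on Clifford's theorem (the paper handles the forward implication via the invariance $[\chi_N,\theta^g]=[\chi_N^g,\theta^g]=[\chi_N,\theta]$, which is just a more computational phrasing of the same fact), and the second assertion is the same bookkeeping in both cases. You are also right that $\mathrm{Irr}(N\mid X)$ in the displayed statement is a typo for $\mathrm{Irr}(G\mid X)$.
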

\begin{proof}
  For $g \in G$, $\chi \in \irr G$ and $\theta \in \irr N$,
  $$[\chi_N, \theta^g] = [\chi_N^g, \theta^g] = [\chi_{N}, \theta].$$
  So, if $\theta_1 = \theta^g$ for some $g\in G$, then $\irr{G|\theta} = \irr{G|\theta_1}$.

  For $\chi \in \irr G$, the irreducible constituents of $\chi_N$ form a $G$-orbit in $\irr N$ by Clifford's theorem, so
   $\irr{G|\theta} \cap \irr{G|\theta_1} \neq \emptyset$ implies that $\theta$ and $\theta_1$ are $G$-conjugate, and the rest of the statement follows.
\end{proof}

We recall that a character $\tau \in \irr M$, where $M$ is a normal subgroup of a group $G$, is \emph{fully ramified with respect to
  $G/M$} if $\tau$ is $G$-invariant and $|\irr{G| \tau}| = 1$.
Observe that, given a group $A$ that  acts by automorphisms on $G$ and an $A$-invariant normal subgroup $N$ of $G$, if $\tau$ is fully ramified with respect to $G/N$
and $\irr{G|\tau} = \{\chi \}$, then the stabilizers in $A$ of $\tau$ and $\chi$ coincide.

\begin{lemma}\label{lemma2}
  Let $N$ be a normal subgroup of a group $G$ and $\theta \in \irr N$ such that $\theta$ is $G$-invariant.
  If $G/N$ is abelian, then there exists a unique subgroup $U$ such that $N \leq U \leq G$ and such that
 every $\tau \in \irr{U| \theta}$ is a  $G$-invariant extension of $\theta$ and $\tau$ is fully ramified with respect to $G/U$.

 Moreover, if the group $A$ acts by automorphisms on $G$ and both   $N$ and $\theta$ are $A$-invariant, then $U$ is $A$-invariant.
\end{lemma}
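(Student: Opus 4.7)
The plan is to apply the classical machinery of projective representations attached to a character triple. Since $\theta$ is $G$-invariant, one first chooses a projective representation $\mathcal{P}\colon G\to GL(\theta(1),\mathbb{C})$ whose restriction to $N$ affords $\theta$, normalized so that its factor set $\alpha$ is trivial whenever one of its arguments lies in $N$; thus $\alpha$ descends to a $2$-cocycle $\bar\alpha$ on the abelian quotient $G/N$. Because $G/N$ is abelian, I would then introduce the canonical bi-multiplicative alternating form
\[
\beta(\bar x,\bar y)=\bar\alpha(\bar x,\bar y)\,\bar\alpha(\bar y,\bar x)^{-1},
\]
which depends only on the cohomology class $[\bar\alpha]$, hence only on the character triple $(G,N,\theta)$. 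The candidate subgroup is defined by $U/N=\mathrm{rad}(\beta)=\{\bar x\in G/N\mid \beta(\bar x,\bar y)=1 \text{ for all } \bar y\in G/N\}$.

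Next I would verify the two required properties. On $U/N\times U/N$ the cocycle $\bar\alpha$ is symmetric, since $\beta$ is trivial there, and a symmetric $2$-cocycle on a finite abelian group with values in $\mathbb{C}^*$ is a coboundary; therefore $\mathcal{P}|_U$ can be rescaled to an ordinary representation affording an extension $\tau_0\in\irr U$ of $\theta$. The radical condition $U/N=\mathrm{rad}(\beta)$ says $\mathcal{P}(g)\mathcal{P}(u)\equiv\mathcal{P}(u)\mathcal{P}(g)$ modulo scalars for all $g\in G$, $u\in U$, which, after a careful computation with the chosen rescaling, yields $\tau_0^g=\tau_0$ for every $g\in G$. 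Because $G/N$ is abelian, $G/U$ fixes every character of $U/N$, so by Gallagher's theorem $\irr{U|\theta}=\{\tau_0\lambda\mid \lambda\in\irr{U/N}\}$ consists entirely of $G$-invariant extensions of $\theta$. Finally, $\beta$ induces a non-degenerate alternating form $\bar\beta$ on $G/U$, so any projective representation of $G/U$ with factor set $\bar\alpha$ is irreducible of dimension $e$ with $e^{2}=|G:U|$; translating via Clifford correspondence, $|\irr{G|\tau}|=1$ for each such $\tau$, i.e.\ every $\tau\in\irr{U|\theta}$ is fully ramified with respect to $G/U$.

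For uniqueness, suppose $U'$ also satisfies the stated properties. Any $\tau'\in\irr{U'|\theta}$ is an extension of $\theta$, which forces $\bar\alpha|_{U'/N\times U'/N}$ to be a coboundary and hence $\beta|_{U'/N}$ trivial, so $U'/N$ is isotropic for $\beta$; the full-ramification condition then pins $|G:U'|$ to the rank of the non-degenerate quotient of $\beta$, and comparison with $|G:U|$ forces $U'/N=\mathrm{rad}(\beta)=U/N$. For the $A$-invariance, since $A$ fixes $N$ and $\theta$, for every $a\in A$ the subgroup $U^a$ also lies between $N$ and $G$ and $\irr{U^a|\theta}=\irr{U|\theta}^a$ is a set of $G$-invariant extensions of $\theta$ each fully ramified with respect to $G/U^a$; the uniqueness just established forces $U^a=U$.

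The principal delicate point is the verification that $\tau_0$ itself is $G$-invariant, rather than merely an extension of the $G$-invariant character $\theta$. The obstruction to $G$-invariance of extensions is a homomorphism $G/U\to\irr{U/N}$ read off from the $G$-action on a chosen extension, and the radical condition $U/N=\mathrm{rad}(\beta)$ is precisely what guarantees that this obstruction vanishes; combined with the abelianness of $G/N$ this propagates $G$-invariance to all extensions $\tau_0\lambda$ and underpins both the existence and the uniqueness of $U$.
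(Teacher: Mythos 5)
The paper does not prove this lemma from scratch: it simply observes that the statement is a rephrasing of Lemma~2.2 of Wolf's paper \cite{W}, and your argument is in substance the standard proof of that cited result. Your construction --- a projective representation $\mathcal{P}$ of $G$ lying over $\theta$ with factor set descending to a $2$-cocycle $\bar\alpha$ on the abelian quotient $G/N$, the alternating bi-multiplicative form $\beta(\bar x,\bar y)=\bar\alpha(\bar x,\bar y)\bar\alpha(\bar y,\bar x)^{-1}$ depending only on the character triple, and $U/N=\mathrm{rad}(\beta)$ --- is exactly the mechanism behind Wolf's lemma, and the existence half of your argument is sound: symmetric cocycles on a finite abelian group with values in $\mathbb{C}^{\times}$ are coboundaries, the conjugation action on a chosen extension $\tau_0$ twists it by the linear character $\bar u\mapsto\beta(\bar g,\bar u)$ of $U/N$ (which vanishes precisely because $U/N$ is the radical), Gallagher propagates invariance to all of $\irr{U|\theta}$, and non-degeneracy of the induced form on $G/U$ makes the twisted group algebra simple, giving full ramification. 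The $A$-invariance of $U$ by uniqueness is also exactly right.

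One step of your uniqueness paragraph needs repair. From the fact that $\theta$ extends to $U'$ you only extract that $U'/N$ is \emph{isotropic} for $\beta$, and isotropy together with $|U'{:}N|=|U{:}N|$ does not force $U'/N=\mathrm{rad}(\beta)$: if $G/N$ is, say, the orthogonal sum of a radical of order $p$ and a hyperbolic plane, a line of the plane is an isotropic subgroup of the correct order that is not the radical. The hypothesis you must invoke here is the \emph{$G$-invariance} of the characters in $\irr{U'|\theta}$: by the same computation you use for $\tau_0$, an extension $\tau'$ of $\theta$ to $U'$ satisfies $(\tau')^{g^{-1}}=\beta(\bar g,\cdot)|_{U'/N}\,\tau'$, so invariance under all of $G$ forces $U'/N\leq\mathrm{rad}(\beta)=U/N$; the index count (or the full-ramification condition) then gives equality. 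You clearly have this obstruction in hand --- it is the content of your final paragraph --- so the fix is immediate, but as written the uniqueness argument leans on the wrong hypothesis.
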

\begin{proof}
  This is a slight rephrasing of Lemma 2.2 of \cite{W}.
\end{proof}

\section{A construction}\label{construction}
In the following, for a positive integer $n$, we denote by $\pi(n)$ the set of the distinct prime divisors of $n$ and,
for a set of primes $\sigma$, we denote by $n_{\sigma}$ (or simply by $n_p$ if $\sigma = \{ p\}$) the $\sigma$-\emph{part} of $n$, that is the largest divisor $k$ of $n$ such that $\pi(k) \subseteq \sigma$.

Let $p$ be a prime number and let $n$ be an odd positive integer such that  $n \neq p$,  $n_3 = 3$  and
\begin{equation}\label{condition}\gcd\left(p^n-1,n\right)=1.
\end{equation}

Following~\cite{isaacs1}, let $F$ be a field of cardinality $p^n$ and let $F\{X\}$ be the skew polynomial ring with
$$Xa=a^pX,$$
for every $a\in F$.
 Next, let $R$ be the quotient of $F\{X\}$ by the ideal generated by $X^4$ and let $x$ be the image of $X$ in $R$. Therefore, $R=\{a_0+a_1 x+a_2 x^2+a_3 x^3\mid a_0,a_1,a_2,a_3\in F\}$.
Let $J$ be the Jacobson radical of $R$, that is, $J=\{a_1x+a_2x^2+a_3x^3\mid a_1,a_2,a_3\in F\}$. Now,
$$P=1+J,$$
is a subgroup of the group of units of $R$, $|P|=p^{3n}$ and each element of $P$ is of the form
$1+a_1x+a_2x^2+a_3x^3$, for some $a_1,a_2,a_3\in F$.

Let $C$ be the subgroup of the multiplicative group $F\setminus\{0\}$ of the field $F$ having order $(p^n-1)/(p-1)$. Clearly, $C$ is cyclic. The group $C$ acts as a group of automorphisms on $P$ by letting (see~\cite{HO})
\begin{align}\label{eq1}
(1+a_1x+a_2x^2+a_3x^3)^c
&=1+a_1c^{\frac{p-1}{p-1}}x+a_2c^{\frac{p^2-1}{p-1}}x^2+a_3c^{\frac{p^3-1}{p-1}}x^3\\\nonumber
&=1+a_1cx+a_2c^{p+1}x^2+a_3c^{p^2+p+1}x^3,
\end{align}
for every $a_1,a_2,a_3\in F$ and $c\in C$.
Let $T=P\rtimes C$. Therefore, $T$ is a group of order $(p^n-1)p^{3n}/(p-1)$.

Let
$H=\mathrm{Gal}(F/F_p)$,   where $F_p$ is the prime subfield of $F$; so, $H$ is a cyclic group  of order $n$.
Now, the group $H$ acts as a group of automorphisms  on $T$ by letting
\begin{equation}\label{eq2}
((1+a_1x+a_2x^2+a_3x^3) c)^\sigma=(1+a_1^\sigma x+a_2^\sigma x^2+a_3^\sigma x^3)  c^\sigma,
\end{equation}
for every $a_1,a_2,a_3\in F$, $c\in C$ and $\sigma\in H$.

Let $$G=T\rtimes H=P\rtimes (C \rtimes H).$$ Therefore, $G$ is a group of order $n(p^n-1)p^{3n}/(p-1)$ and
$CH$ is a complement of $P$ in $G$.

Recall that $n_3 = 3$. Let $D$ be the  subgroup of $C$ having order $p^2+p+1=(p^3-1)/(p-1)$.
\begin{lemma}\label{yet:another}The number $(p^3-1)/(p-1)$ and $(p^n-1)/(p^3-1)$ are relatively prime.

Moreover, $p-1$ is relatively prime to $(p^n-1)/(p-1)$.
\end{lemma}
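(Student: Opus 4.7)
The plan is to reduce both coprimality statements to the hypothesis $\gcd(p^n - 1, n) = 1$ via the standard telescoping identity
\[ \frac{p^{kn/k} - 1}{p^k - 1} \;=\; \sum_{j=0}^{(n/k)-1} p^{jk} \;\equiv\; \frac{n}{k} \pmod{p^k - 1}. \]
The observation that makes this strategy work is that both $p^2+p+1 = (p^3-1)/(p-1)$ and $p-1$ divide $p^n - 1$ (using $3 \mid n$, which is given by $n_3 = 3$), so their gcds with $n$ are forced to equal $1$ by the coprimality assumption on $p^n - 1$ and $n$.

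For the first claim, I would write $n = 3m$ and expand
\[ \frac{p^n - 1}{p^3 - 1} \;=\; \sum_{j=0}^{m-1} p^{3j}. \]
Reducing modulo the divisor $p^2 + p + 1$ of $p^3 - 1$ makes every $p^{3j} \equiv 1$, so the sum is congruent to $m$. Hence any common divisor of $(p^n-1)/(p^3-1)$ and $p^2+p+1$ divides $m$, and therefore divides $\gcd(n, p^2+p+1)$. Since $3 \mid n$ implies $(p^3-1) \mid (p^n-1)$, this last gcd divides $\gcd(n, p^n-1) = 1$, giving the result.

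For the second claim, the argument is strictly simpler: $(p^n-1)/(p-1) = 1 + p + \cdots + p^{n-1} \equiv n \pmod{p-1}$, so any common divisor of $(p^n-1)/(p-1)$ and $p-1$ divides $\gcd(n, p-1)$; and since $(p-1) \mid (p^n-1)$, this is again absorbed by $\gcd(n, p^n-1) = 1$.

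There is no real obstacle here. The only mild care point is keeping track of which hypotheses of the construction are actually used: the essential input is $\gcd(p^n-1, n) = 1$, while $n_3 = 3$ contributes only the divisibility $3 \mid n$ required to make $(p^3-1) \mid (p^n - 1)$; the oddness of $n$ and $n \neq p$ play no role in this particular lemma.
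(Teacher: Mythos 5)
Your proof is correct and follows essentially the same route as the paper: both arguments telescope $(p^n-1)/(p^3-1)$ and $(p^n-1)/(p-1)$ into geometric sums, reduce them to $n/3$ and $n$ modulo $p^3-1$ (or its divisor $p^2+p+1$) and $p-1$ respectively, and then invoke $\gcd(p^n-1,n)=1$. No gaps; your closing remark correctly isolates which hypotheses are actually used.
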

\begin{proof}We have $(p^3-1)/(p-1)=p^2+p+1$ and
\begin{align*}
\frac{p^n-1}{p^3-1}&=p^{n-3}+p^{n-6}+\cdots+p^3+1\equiv \underbrace{1+1+\cdots+1+1}_{n/3\, \textrm{ times}}\pmod{p^3-1}.
\end{align*}
This shows that, if $d$ divides $p^2+p+1$ and $(p^n-1)/(p^3-1)$, then $d$ divides $n/3$. Since, by hypothesis, $\gcd(p^n-1,n)=1$, we deduce $d=1$.

Arguing as above, we have
\begin{align*}
\frac{p^n-1}{p-1}&=p^{n-1}+p^{n-2}+\cdots+p+1\equiv \underbrace{1+1+\cdots+1+1}_{n\, \textrm{ times}}\pmod{p-1}.
\end{align*}
Hence $\gcd(p-1,(p^n-1)/(p-1)) = \gcd(p-1,n)=1$, because $\gcd(p^n-1,n)=1$ by~\eqref{condition}.
\end{proof}

By Lemma~\ref{yet:another}, we have $$C=D\times E,$$ $|D|=p^2+p+1$ and $E$ is a Hall $\pi(p^2+p+1)'$-subgroup of $C$, $|E|=(p^n-1)/(p^3-1)$. Moreover, $F\setminus\{0\}=(F_p\setminus\{0\})\times D \times E$.

Following~\cite{isaacs1} , we let $P_i=1+J^i$. In particular, $P_1=P$, $P_4=1$ and $P_i/P_{i+1}$ is isomorphic to the additive group of the field $F$, for each $i\in  \{1,2,3\}$. Clearly, $P_i\unlhd G$, for each $i$.

\begin{lemma}\label{lemma:frobenius}
  \begin{description}
  \item[(a)] $T/P_3$ is a Frobenius group with Frobenius kernel $P/P_3$ and with Frobenius complement $CP_3/P_3\cong C$.
  \item[(b)]  $E$ acts fixed-point-freely on $P$ and, for every $E$-invariant subgroup $N$ of $P$ and a non-principal character $\theta \in \irr N$, we have
    $I_E(\theta) = 1$. Moreover, $D \leq \cent G{P_3}$.
  \end{description}
\end{lemma}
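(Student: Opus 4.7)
The plan for (a) is to verify that $C$ acts fixed-point-freely on $(P/P_3)\setminus\{P_3\}$; combined with the fact that $C\cap P=1$ in $T=P\rtimes C$, this yields the Frobenius structure on $T/P_3$. Substituting into~\eqref{eq1}, the coset $(1+a_1 x+a_2 x^2)P_3$ is fixed by $c\in C$ precisely when $a_1(c-1)=a_2(c^{p+1}-1)=0$. Non-triviality of the coset forces either $a_1\ne 0$ (whence $c=1$ directly) or $a_1=0$ and $a_2\ne 0$ (whence $c^{p+1}=1$). To handle the latter I would establish the coprimality $\gcd((p^n-1)/(p-1),p+1)=1$ via
\[
\frac{p^n-1}{p-1}=\sum_{i=0}^{n-1}p^i\equiv\sum_{i=0}^{n-1}(-1)^i\equiv 1\pmod{p+1},
\]
which holds because $n$ is odd; this appears to be the only non-routine step.

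For the first half of (b), I would show $\cent P e = 1$ for every $e\in E\setminus\{1\}$ by the same pattern. If $g=1+a_1 x+a_2 x^2+a_3 x^3\ne 1$ is fixed by $e$, let $i$ be the smallest index with $a_i\ne 0$; then~\eqref{eq1} forces $e^{(p^i-1)/(p-1)}=1$. The case $i=1$ gives $e=1$; the case $i=2$ uses $\gcd(|E|,p+1)=1$, inherited from~(a) via $E\le C$; the case $i=3$ uses $\gcd(|E|,p^2+p+1)=1$, which holds by construction since $E$ is the Hall $\pi(p^2+p+1)'$-subgroup of $C$.

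For the character statement in~(b), let $N\le P$ be $E$-invariant and $\theta\in\irr N\setminus\{1_N\}$; I need $I_E(\theta)=1$. Since $|E|$ divides $p^n-1$ and so is coprime to $|N|$, Lemma~\ref{iso} yields an $E$-set isomorphism $\irr N\cong\cl N$. For each $e\in E\setminus\{1\}$, Glauberman's lemma (\cite[Theorem~13.1]{isaacs}) gives $|\cl N^{\langle e\rangle}|=|\cl{\cent N e}|$; but $\cent N e\le\cent P e=1$ by the previous step, so only the trivial class is $e$-fixed, and hence only $1_N$ is $e$-fixed in $\irr N$. Therefore $e\notin I_E(\theta)$, giving $I_E(\theta)=1$. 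Finally, $D\le\cent G{P_3}$ is immediate from~\eqref{eq1}: for $d\in D$, $d^{|D|}=d^{p^2+p+1}=1$, whence $(1+a_3 x^3)^d=1+a_3\,d^{p^2+p+1}x^3=1+a_3 x^3$ for every $a_3\in F$. The main obstacle is the number-theoretic coprimality in~(a); the rest is routine once that, Lemma~\ref{iso}, and Glauberman's lemma are in hand.
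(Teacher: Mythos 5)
Your proposal is correct and follows essentially the same route as the paper: the same computation with~\eqref{eq1} and the same congruence $\sum_{i=0}^{n-1}p^i\equiv 1\pmod{p+1}$ for part~(a), and Lemma~\ref{iso} plus Glauberman's lemma for the character statement in~(b); the only (harmless) variation is that you compute $\cent{P}{e}=1$ directly on all of $P$, whereas the paper combines the fixed-point-free action on $P/P_3$ from part~(a) with that on $P_3$. One small correction: Glauberman's lemma is \cite[Lemma~13.8]{isaacs}, not Theorem~13.1.
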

\begin{proof}
  (a): Let $1+a_1x+a_2x^2\in P$ and $c\in {\bf C}_C((1+a_1x+a_2x^2)P_3)$.  By~\eqref{eq1}, we have $$1+a_1cx+a_2c^{p+1}x^2=(1+a_1x+a_2x^2)^c\equiv 1+a_1x+a_2x^2\pmod {P_3}$$
if and only if $a_1 c=a_1$ and $a_2c^{p+1}=a_2$. Assume $c\ne 1$. We immediately deduce $a_1=0$. Moreover, since $p\equiv -1\pmod{p+1}$ and since $n$ is odd, we have
\begin{align*}
\frac{p^{n}-1}{p-1}&=p^{n-1}+p^{n-2}+\cdots+p+1\equiv(-1)^{n-1}+(-1)^{n-2}+\cdots+(-1)^1+(-1)^0\pmod{p+1}\\
&\equiv 1\pmod {p+1}.
\end{align*}
Therefore, $\gcd((p^n-1)/(p-1),p+1)=1$. In particular, $c^{p+1}\ne 1$ and hence $a_2=0$. Therefore,  $1+a_1x+a_2x^2= 1$ and part (a) is proved.

\smallskip
(b): Let $c\in C$. As $C= D\times E$, we may write $c=de$ for some $d\in D$ and $e\in E$. Let $t=1+b x^3\in P_3$. Then
$$t^c=1+b c^{p^2+p+1}x^3=1+b e^{p^2+p+1} x^3.$$
Then $D \leq \cent G{P_3}$ and, as $\gcd(p^2+p+1,|E|)=1$, $E$ acts fixed-point-freely on $P_3$.
  Since $E$ acts fixed-point-freely on
$P/P_3$ by part (a), $E$ acts fixed-point-freely on $P$.
Thus, if  $N$ is an $E$-invariant subgroup of $P$, then  $E$ acts fixed-point-freely on $N$,  and $\irr N$ and $\cl N$ are isomorphic
$E$-sets by Lemma~\ref{iso}.
Hence, the rest of part (b) follows by Glauberman's lemma (\cite[Lemma~13.8]{isaacs}).
\end{proof}

The map $(x,y)\mapsto [x,y]$ defines a function $$P/P_2\times P_2/P_3\to P_3.$$
Since $P/P_2$, $P_2/P_3$ and $P_3$ are naturally isomorphic to the additive group of the field $F$, this commutator mapping defines a function
\begin{equation}\label{eq3}\langle\cdot,\cdot \rangle:F\times F\to F.
\end{equation} This function is computed explicitly in~\cite{isaacs1}. Indeed,
let $s=1+a x+i\in P$ with $i\in J^2$ and let $t=1+b x^2+j\in P_2$ with $j\in J^3$. From~\cite[Corollary~4.2]{isaacs1}, we have
\begin{align}\label{eq:5}
[s,t]&=1+(ab^p-a^{p^2}b)x^3.
\end{align}
This shows that the function in~\eqref{eq3} is defined by $\langle a,b\rangle=ab^p-a^{p^2}b$.

From its definition, we deduce that, for every $a\in F$, $b\mapsto \langle a,b\rangle$ is a linear $F_p$-map, where (as above) $F_p$ is the finite subfield of $F$ having cardinality $p$.  We let
$$\langle a,F\rangle=\{ab^p-a^{p^2}b\mid b\in F\}$$
denote the image of this mapping.
\begin{lemma}\label{yet:anotherr}Let $a\in F$. If $a=0$, then $b\mapsto\langle a,b\rangle$ is the zero function. If $a\ne 0$, then the kernel of the function $b\mapsto \langle a,b\rangle$ is $\{c a^{p+1}\mid c \in F_p\}$ and the image $\langle a,F\rangle$ is an $F_p$-subspace of $F$ of codimension $1$.
\end{lemma}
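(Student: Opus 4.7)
The plan is to treat the case $a=0$ as immediate and, for $a\neq 0$, to compute the kernel of the map $\varphi_a\colon F\to F$, $\varphi_a(b)=ab^p-a^{p^2}b$, directly; the codimension statement then follows from rank--nullity over $F_p$.

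First I would note that the Frobenius endomorphism $b\mapsto b^p$ is $F_p$-linear, since it is additive and fixes $F_p$ pointwise, and left-multiplication by any fixed element of $F$ is also $F_p$-linear. Consequently $\varphi_a$ is $F_p$-linear, and the assertion for $a=0$ is trivial. So assume $a\neq 0$.

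Next I would determine $\ker\varphi_a$. Clearly $0\in\ker\varphi_a$; for $b\neq 0$, the equation $ab^p=a^{p^2}b$ rearranges to
\[
b^{p-1}=a^{p^2-1}=(a^{p+1})^{p-1},
\]
that is, $(ba^{-(p+1)})^{p-1}=1$. Here I would invoke the key observation that the polynomial $Y^{p-1}-1\in F[Y]$ has at most $p-1$ roots in $F$, while every element of $F_p^{\ast}$ is a root (since $|F_p^{\ast}|=p-1$), so its roots in $F$ are exactly the elements of $F_p^{\ast}$. Hence $ba^{-(p+1)}\in F_p^{\ast}$, which gives $b=ca^{p+1}$ for some $c\in F_p^{\ast}$. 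Combining with $b=0$ yields $\ker\varphi_a=\{ca^{p+1}\mid c\in F_p\}$, an $F_p$-subspace of dimension $1$.

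Finally, applying rank--nullity to the $F_p$-linear map $\varphi_a$ on the $n$-dimensional $F_p$-space $F$, I conclude that $\langle a,F\rangle=\mathrm{im}\,\varphi_a$ has $F_p$-dimension $n-1$, i.e.\ $F_p$-codimension $1$ in $F$. The only step that requires a moment's care is the identification of the $(p-1)$-th roots of unity in $F$ with $F_p^{\ast}$; once this is in place, the rest is routine linear algebra over $F_p$.
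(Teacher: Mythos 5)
Your proposal is correct and follows essentially the same route as the paper: both reduce the kernel computation to $b^p=a^{p^2-1}b$, identify the solutions as $F_p$-multiples of $a^{p+1}$, and deduce the codimension from $F_p$-linearity; you merely spell out the identification of the $(p-1)$-th roots of unity with $F_p^{\ast}$ and the rank--nullity step, which the paper leaves implicit.
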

\begin{proof}
The result is clear  when $a=0$. Assume then $a\ne 0$. Let $b\in F$ with $\langle a,b\rangle=0$. Thus $ab^p=a^{p^2}b$ implies $b^p=a^{p^2-1}b$, that is, $b\in \{c a^{p+1}\mid c \in F_p\}$.
\end{proof}

We let
\begin{equation}\label{eq:3}
  \pi_0 = \langle 1, F \rangle =\{b-b^p\mid b\in F\}.\footnote{A slight notation conflict arises in this context. Specifically, in Section~\ref{sec:intro}, $\pi_0$ is utilized to represent the connected component of $\Delta(G/\gamma_3(P))$ that does not include the prime $p$. It's worth noting that this discrepancy should not lead to confusion, as the context makes the distinction evident.}
\end{equation}

We note that, by the additive form of Hilbert's Theorem 90, $\pi_0$ is the kernel of the trace map $\mathrm{Tr}: F \rightarrow F_p$, so  $\pi_0$ is an $H$-invariant hyperplane of $F$ (seen as an $F_p$-vector space).

Let $\Pi_0$ be the $E$-orbit, with respect to the natural action of $E$ on the hyperplanes of $F$, containing $\pi_0$.
We denote by $F_{p^3}$ the subfield of $F$ of cardinality $p^3$.

\begin{lemma}\label{lemma:te1}
For $a \in F$, we have $\langle a, F \rangle = \pi_0$ if and only if $0 \neq a \in F_{p^3}$, and
 $$\Pi_0 = \{\langle a,F\rangle\mid a\in F\setminus\{0\}\}.$$
\end{lemma}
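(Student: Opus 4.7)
The plan is to reduce both statements to arithmetic in the cyclic group $F^*$ via a single explicit identity. The key step is to establish
\[
\langle a, F \rangle = a^{p^2+p+1}\,\pi_0 \qquad \text{for every } a \in F^*.
\]
This follows from the substitution $b = a^{p+1}b'$ in $\langle a, b \rangle = ab^p - a^{p^2}b$, which yields $\langle a, a^{p+1}b'\rangle = a^{p^2+p+1}\bigl((b')^p - b'\bigr) = a^{p^2+p+1}\langle 1, b'\rangle$; since $b' \mapsto a^{p+1}b'$ is a bijection of $F$, letting $b'$ range over $F$ gives the identity.

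Next I would identify $\mathrm{Stab}_{F^*}(\pi_0)$ under multiplication. Using the description $\pi_0 = \ker \mathrm{Tr}$ noted in the text (additive Hilbert~90), if $\lambda\pi_0 = \pi_0$ then the $F_p$-linear form $y \mapsto \mathrm{Tr}(\lambda y)$ has the same kernel as $\mathrm{Tr}$ and is therefore proportional to it, so $\mathrm{Tr}(\lambda y) = c\,\mathrm{Tr}(y) = \mathrm{Tr}(cy)$ for some $c \in F_p^*$ and all $y$; non-degeneracy of the trace pairing then forces $\lambda = c \in F_p^*$. Hence $\mathrm{Stab}_{F^*}(\pi_0) = F_p^*$. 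Combined with the key identity, the first assertion is immediate: for $a \in F^*$, $\langle a, F\rangle = \pi_0$ iff $a^{p^2+p+1} \in F_p^*$, iff $a^{(p^2+p+1)(p-1)} = a^{p^3-1} = 1$, iff $a$ lies in the unique subgroup of $F^*$ of order $p^3-1$, which is $F_{p^3}^*$ (since $3 \mid n$). The case $a = 0$ is trivial as $\langle 0,F\rangle = \{0\} \ne \pi_0$.

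For the second assertion, the key identity rewrites $\{\langle a, F\rangle \mid a \in F^*\}$ as $\{a^{p^2+p+1}\pi_0 \mid a \in F^*\}$. From Lemma~\ref{yet:another} one obtains both $\gcd(|E|, p-1) = 1$ and $\gcd(|E|, p^2+p+1) = 1$, so $\gcd(|E|, p^3-1) = 1$ and consequently $F_{p^3}^* \cap E = 1$; comparing orders gives the internal product decomposition $F^* = F_{p^3}^* \cdot E$. Writing any $a \in F^*$ as $a = de$ with $d \in F_{p^3}^*$ and $e \in E$, the first assertion yields $d^{p^2+p+1} \in F_p^*$, so $a^{p^2+p+1}\pi_0 = e^{p^2+p+1}\pi_0$. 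Finally, since $\gcd(|E|, p^2+p+1) = 1$, the map $e \mapsto e^{p^2+p+1}$ is a bijection of $E$, so the set in question equals $\{e\pi_0 \mid e \in E\} = \Pi_0$. The only real obstacle is tracking the various coprimality conditions supplied by Lemma~\ref{yet:another}; once the substitution leading to the key identity is spotted, the rest is elementary manipulation in $F^*$.
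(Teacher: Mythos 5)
Your proof is correct and follows essentially the same route as the paper's: both reduce the lemma to the identity that $\langle a,F\rangle$ is the translate of $\pi_0$ by a $(p^2+p+1)$-th power, and then exploit the decomposition of $F^\times$ (you use $F_{p^3}^\times \times E$, the paper uses $F_p^\times\times D\times E$) together with the coprimality facts of Lemma~\ref{yet:another}. Your write-up is marginally cleaner at two points --- the substitution $b=a^{p+1}b'$ yields $\langle a,F\rangle=a^{p^2+p+1}\pi_0$ uniformly for all $a\in F^\times$, and you verify explicitly via the trace pairing that $\mathrm{Stab}_{F^\times}(\pi_0)=F_p^\times$, a fact the paper leaves implicit when concluding that $\langle a,F\rangle=\pi_0$ if and only if $a_e=1$ --- but these are presentational differences, not a different argument.
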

\begin{proof}
Let $a\in F\setminus\{0\}$. As $F\setminus\{0\}=(F_p\setminus\{0\})\times D\times E$, we may write $a=a_f a_da_e$ for some $a_f\in F_p\setminus\{0\}$, $a_d\in D$ and $a_e\in E$.
Next, let $b\in F\setminus\{0\}$ and let $b_0=a_e^{-(p+1)}b$. We have
\begin{align*}
\langle a,b\rangle&=
ab^p-a^{p^2}b=
a_fa_da_eb_0^pa_e^{p^2+p}-a_f^{p^2}a_d^{p^2}a_e^{p^2}b_0a_e^{p+1}\\
&=a_fa_db_0^pa_e^{p^2+p+1}-a_fa_d^{p^2}b_0a_e^{p^2+p+1}\\
&=a_e^{p^2+p+1}\left(a_fa_db_0^p-a_fa_d^{p^2}b_0\right)\\
&=a_e^{p^2+p+1}\underbrace{\left((-a_fa_d^{p^2}b_0)-(-a_fa_d^{p^2}b_0)^p\right)}_{\in \pi_0}\in a_e^{p^2+p+1}\pi_0.
\end{align*}
Hence, recalling Lemma~\ref{yet:anotherr}
$$\langle a , F \rangle = a_e^{p^2+p+1}\pi_0$$
and, as $|E|$ is coprime to $p^2 + p +1$ by Lemma~\ref{yet:another}, we conclude that $\langle a, F \rangle = \pi_0$
if and only if $a_e = 1$, which is equivalent to $a \in (F_p\setminus\{0\})\times D = F_{p^3}\setminus\{0\}$.  

Finally, by observing that, as $a_e$ runs through the elements of $E,$  so does $a_e^{p^2+p+1}$, we 
see  that the hyperplanes of the form $\langle a,F\rangle$ are in one to one correspondence with the elements of $E$ and,
in particular, that $\Pi_0 = \{\langle a,F\rangle\mid a\in F\setminus\{0\}\}$.
\end{proof}

\smallskip
Write $$H = R \times S,$$ where $R$ is the Sylow $3$-subgroup of $H$,  so $|R| = 3$, and
$S$ is the $3$-complement of $H$.

\smallskip
As $P_3=\{1+a x^3\mid a\in F\}$,
the mapping $\omega: P_3\to F$ defined by $1+a x^3\mapsto a$ sets up a group isomorphism
between  $P_3$ and the additive group of $F$. We also remark that, via $\omega$,  $P_3$ and $F$ are isomorphic $H$-sets.
\begin{lemma} \label{lce}
  Let $X = \widehat{P_3}\setminus\{1_{P_3}\}$. Then $G$ acts on $X$  and we denote by $X_0$  the union of the $H$-invariant $C$-orbits in $X$. Then
  \begin{description}
  \item[(a)] For every $\varphi \in X \setminus X_0$, there exists  $g \in G$ such that $I_G(\varphi^g) = PDS$.
  \item[(b)] If $\varphi_0 \in X$ is such that $\omega(\mathrm{ker}(\varphi_0)) = \pi_0$, then $\{ \varphi_0^i \mid i = 1, \ldots, p-1\}$
    is a set of representatives  for the $G$-orbits in $X_0$. Moreover, $I_G(\varphi_0^i) = PDH$ for every $i = 1, \ldots, p-1$.
  \item[(c)] For every $\varphi \in X$, $\varphi\in X_0$ if and only if  $\omega(\ker(\varphi)) \in \Pi_0$.
  \end{description}
\end{lemma}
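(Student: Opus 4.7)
The plan is to identify $\widehat{P_3}$ with the additive group of $F$ and compute the $G$-action on characters in these coordinates. Fix a nontrivial character $\psi_0:F_p\to\mathbb{C}^*$ and, for $a\in F$, define $\varphi_a\in\widehat{P_3}$ by $\varphi_a(1+bx^3)=\psi_0(\mathrm{Tr}(ab))$, where $\mathrm{Tr}:F\to F_p$ is the field trace. Then $a\mapsto\varphi_a$ is a group isomorphism $F\cong\widehat{P_3}$ and $X=\{\varphi_a : a\in F\setminus\{0\}\}$. From~\eqref{eq1} one checks $\varphi_a^c=\varphi_{ac^{-(p^2+p+1)}}$ for $c\in C$; from~\eqref{eq2} together with $H$-invariance of $\mathrm{Tr}$, $\varphi_a^\sigma=\varphi_{a^\sigma}$ for $\sigma\in H$; and $P$ acts trivially on $\widehat{P_3}$ because $P_3\leq Z(P)$ (as $[P_3,P]\leq P_4=1$). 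In particular $I_G(\varphi)=P\cdot I_{CH}(\varphi)$, so the entire analysis reduces to computing stabilizers in $CH$ acting on $F\setminus\{0\}$.

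Because $c\mapsto c^{p^2+p+1}$ maps $C=D\times E$ onto $E$ with kernel $D$, the $C$-orbit of $\varphi_a$ is $\{\varphi_{ae}:e\in E\}$. Writing $a=a_f a_d a_e$ according to the decomposition $F^*=F_p^*\times D\times E$, and using that $H$ stabilizes each of these three factors, this $C$-orbit is $H$-invariant if and only if $a_d$ is $H$-fixed, equivalently $a_d\in D\cap F_p^*=\{1\}$; hence $X_0=\{\varphi_a:a\in F_p^*E\}$. Part (c) now follows at once: by additive Hilbert 90, $\ker\mathrm{Tr}=\pi_0$, so $\omega(\ker\varphi_a)=a^{-1}\pi_0$; by non-degeneracy of the trace pairing, $\mathrm{Stab}_{F^*}(\pi_0)=F_p^*$; and $\Pi_0=E\pi_0$ by Lemma~\ref{lemma:te1}. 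Therefore $a^{-1}\pi_0\in\Pi_0$ if and only if $a\in E\cdot F_p^*$, which is the same as $a_d=1$.

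For part (a), given $\varphi_a\in X\setminus X_0$ (so $a_d\neq 1$), use Lemma~\ref{yet:another} to pick $c\in E$ with $c^{p^2+p+1}=a_e$; setting $g=c$ one computes $\varphi_a^g=\varphi_b$ with $b=a_f a_d\in F_{p^3}^*\setminus F_p^*$. For this $b$ and any $\tau\in H$, $b^\tau b^{-1}\in D$ since $b\in F_p^*D$, while $({c'}^\tau)^{p^2+p+1}\in E$ for every $c'\in C$; because $D\cap E=1$, the defining equation $b^\tau b^{-1}=({c'}^\tau)^{p^2+p+1}$ for $c'\tau\in I_{CH}(\varphi_b)$ forces $b^\tau=b$ and $c'\in D$. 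Since $[F_{p^3}:F_p]=3$ is prime and $b\notin F_p$, we have $F_p(b)=F_{p^3}$ and $\mathrm{Stab}_H(b)=\mathrm{Gal}(F/F_{p^3})=S$; hence $I_G(\varphi_b)=PDS$. For part (b), $\omega(\ker\varphi_0)=\pi_0$ forces $\varphi_0=\varphi_\lambda$ for some $\lambda\in F_p^*$, so $\varphi_0^i=\varphi_{i\lambda}$ for $i=1,\dots,p-1$ yields $p-1$ distinct characters in $X_0$ whose $G$-orbits are the cosets $i\lambda E\subseteq F_p^*E$, pairwise disjoint and together exhausting $F_p^*E$. Since $i\lambda\in F_p^*$ is $H$-fixed, the same inertia calculation leaves $\tau\in H$ unconstrained, yielding $I_G(\varphi_0^i)=PDH$.

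The main obstacle is not any single deep step but the careful bookkeeping required to translate the $C$- and $H$-actions from $P_3$ to $\widehat{P_3}$ with consistent conventions, and then to choose convenient representatives in each $G$-orbit so that the stabilizer computation separates cleanly. The crucial ingredient enabling this separation is the coprimality $\gcd(|E|,p^2+p+1)=1$ from Lemma~\ref{yet:another}, which makes the $(p^2+p+1)$-th power map a bijection on $E$ while simultaneously vanishing on $D$; this forces the two sides of the stabilizer equation to sit in complementary direct factors of $F^*$, and ultimately accounts for the appearance of $DS$ versus $DH$ in the two inertia groups.
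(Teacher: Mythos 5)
Your proof is correct, and it takes a genuinely different route from the paper's. The paper never writes down an explicit model of $\widehat{P_3}$: it observes that $PD\leq \cent G{P_3}$, transfers the whole orbit analysis to $X'=P_3\setminus\{1\}$ via the $A$-set isomorphism $\irr{P_3}\cong\cl{P_3}$ of Lemma~\ref{iso}, identifies the $E$-orbits with the nontrivial elements of $F^\times/E\simeq F_{p^3}^\times$, and then invokes Glauberman's lemma to produce an $S$-invariant character in each $E$-orbit outside $X_0$, obtaining part~(a) without naming a representative; part~(b) is handled by a direct check that $\varphi_0$ is $H$-invariant plus a counting argument on the powers $\varphi_0^i$. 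You instead realize $\widehat{P_3}$ explicitly as $F$ via the trace pairing $\varphi_a(1+bx^3)=\psi_0(\mathrm{Tr}(ab))$, compute the $C$- and $H$-actions in these coordinates, and read off stabilizers from the equation $b^\tau b^{-1}=((c')^\tau)^{p^2+p+1}$ together with $D\cap E=1$. This buys explicit orbit representatives ($\varphi_{a_fa_d}$ with $a_d\neq 1$ for part~(a)) and removes the need for both the class--character transfer and Glauberman's lemma, at the cost of more bookkeeping; the underlying combinatorics (orbits are cosets of $E$ in $F^\times$, the $(p^2+p+1)$-power map is bijective on $E$ and kills $D$, $S$ fixes every $E$-orbit while $R$ fixes exactly $p-1$ of them) is the same in both arguments. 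Two small points worth making explicit in your write-up: the identity $\mathrm{Stab}_{F^\times}(\pi_0)=F_p^\times$ deserves the one-line justification via nondegeneracy of the trace form that you only gesture at, and the fact that $H$ preserves the factors $F_p^\times$, $D$, $E$ should be attributed to their being the unique subgroups of the cyclic group $F^\times$ of their (pairwise coprime, by Lemma~\ref{yet:another}) orders.
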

\begin{proof}
Since $P_3 = \mathrm{Z}(P)$ and $D \leq \cent G{P_3}$ by Lemma~\ref{lemma:frobenius} part~(b),  $PD \leq \cent G{P_3}$. Therefore, as $G=PDEH$, the $G$-orbits and the $EH$-orbits in $X$ coincide.
  Let  $X' = P_3 \setminus \{1\}$.
With reference to Lemma~\ref{iso}, it is established that $X$ and $X'$ are isomorphic $EH$-sets. Consequently, we leverage the action of $EH$ on $X'$ to draw conclusions regarding the action of $EH$ on $X$.

Since the map $E \to E$ such that $e \mapsto e^{p^2+p+1}$ is a bijection, the orbits of $E$ on $X'$ are the subsets of the form
$\{1 + ea x^3 | e \in E\}$, for  $0 \neq a \in F_{p^3}$. So  they correspond, identifying   $X'$ and $F^\times = F \setminus \{ 0\}$ via $\omega$,  to
the non-trivial cosets  of $E$ in $F^\times$, i.e. to  the non-trivial elements of the  group $F^\times/E \simeq F_{p^3}^\times$.
Let $\sigma  \in H$ be the cube of the Frobenius automorphism of $F$;   so $S = \langle \sigma \rangle$. Since $a^{\sigma} = a^{p^3} = a^{p^3 -1} a$, and $a^{p^3 -1} \in E$ (as $o(a^{p^3 -1})$ divides $(p^n-1)/(p^3 -1)= |E|$), it follows that $S$ fixes all $E$-orbits on $X'$. On the other hand, $R$ acts on them as it acts on
$F_{p^3}^\times$, so $R$ fixes exactly $p-1$ $E$-orbits on $X'$.
Therefore, by the isomorphism of the $EH$-sets $X$ and $X'$,  $S$ acts trivially on set of the $E$-orbits in
$X$ and $R$ fixes exactly $p-1$ of them (and permutes the others in orbits of size $3$).
Hence,  $X_0$ contains exactly $p-1$ $G$-orbits.
Moreover, by Glauberman's lemma every $E$-orbit in $X \setminus X_0$ contains an
$S$-invariant character.  So, recalling that, by Lemma~\ref{lemma:frobenius} part~(b),  $I_E(\varphi) = 1$ for every
$\varphi \in X$, part (a) follows.
We remark that, hence, the $C$-orbits in $X$ are in fact $G$-orbits.

We observe that for $\varphi \in X$, the stabilizer of the group $\langle\varphi\rangle$ generated by $\varphi$ under the action of  $E$ is trivial,  because $\cent{E}{\langle\varphi\rangle}=1$
and $|\mathrm{Aut}(\langle \varphi \rangle)| = p-1$ is coprime to  $|E|$.
It follows that $\langle \varphi \rangle$ intersects $p-1$ distinct $E$-orbits in $X$.

Let now $\varphi_0 \in X$ be such that $\omega(\mathrm{ker}(\varphi_0)) = \pi_0$. Then $\varphi_0$ is $H$-invariant.
In fact, writing $H = \langle h \rangle$, where $h$ is the Frobenius automorphism of $F$,  for any $t = 1 +b x^3 \in P_3$,
$\varphi_0^{h^{-1}}(t) = \varphi_0(t^h) = \varphi_0(t)$ as $t^ht^{-1} = (1+b^p x^3)(1 - b x^3) = 1+ (b^p - b)x^3 \in
\mathrm{ker}(\varphi_0)$. Thus, by the previous paragraph, we conclude that $\{ \varphi_0^i \mid i = 1, \ldots, p-1\}$
is a set of representatives  for the $G$-orbits in $X_0$. Since the characters $\varphi_0^i$ generate the same
subgroup of $\widehat{P_3}$ for every $i = 1, \ldots, p-1$, recalling again that $I_E(\varphi_0) = 1$, we have
that $I_G(\varphi_0^i) = PDH$ for every $i = 1, \ldots, p-1$, and part (b) is proved.

Finally, recalling the definition of $\Pi_0$ and $X_0$, we deduce that  $\varphi \in X_0$ if and only if  $\omega(\ker(\varphi)) \in \Pi_0$, so we have part (c).
\end{proof}

Let $B=\{1+b x^2\mid b\in F\} \leq  P_2$. Observe that $B$ is normalized by $CH$ and $$P_2=B\times P_3.$$
Therefore, the characters in $\mathrm{Irr}(P_2| P_3)$ are of the form $\nu\times \varphi$, where $\nu\in\mathrm{Irr}(B)$ and $\varphi\in X$. Recall that, from Lemma~\ref{lce}, $X = \widehat{P_3}\setminus\{1_{P_3}\}$.

Let
$$Q=\{1+a_1 x+a_2x^2+a_3x^3\in P\mid a_1,a_2,a_3\in F_{p^3}\} = \cent PS.$$

Given $\varphi\in X$ and $a\in F$, we define $\varphi_a\in\mathrm{Irr}(B)$ by setting
\begin{equation}\label{eq:def}\varphi_a(1+b x^2)=\varphi(1 -\langle a,b\rangle x^3),\end{equation}
for every $b\in F$.

\begin{lemma}\label{lemma:tech2}
  Let $\mu\times \varphi\in\mathrm{Irr}(P_2)$, with $\mu\in\mathrm{Irr}(B)$ and $\varphi\in X$. Then
  \begin{description}
  \item[(a)]   for every $s=1+a x+i\in P$ with $i\in J^2$, we have
$$(\mu\times \varphi)^{s^{-1}}=\mu\varphi_a\times \varphi.$$

\item[(b)] If $\varphi_0$ and $X_0$ are as in Lemma~$\ref{lce}$, then $I_P(\mu\times \varphi_0) = P_2 Q$. Moreover,  for
  $\varphi  \in X \setminus X_0$, we have $I_P(\mu\times \varphi) = P_2$.
  \end{description}
\end{lemma}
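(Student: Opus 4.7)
For part~(a), the plan is a direct computation, exploiting the fact that $P_3 = \mathrm{Z}(P)$ and that $P_2 = B \times P_3$ is abelian. Write a generic element $t \in P_2$ as $t = t_B t_3$ with $t_B = 1 + bx^2 \in B$ and $t_3 = 1 + cx^3 \in P_3$. Using the convention $\chi^{s^{-1}}(t) = \chi(t^s)$, one first computes $t^s = s^{-1}ts$. From $st = ts[s,t]$ one gets $t^s = t[s,t]^{-1}$, and since $t_3$ is central $[s,t] = [s,t_B]$, so by equation~\eqref{eq:5} we have $[s,t] = 1 + \langle a,b\rangle x^3$ and hence $t^s = t_B \cdot (1 + (c - \langle a,b\rangle)x^3)$. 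Evaluating $\mu\times\varphi$ at $t^s$, splitting across the direct product $B \times P_3$, and invoking the definition~\eqref{eq:def} of $\varphi_a$, gives exactly $\mu(t_B)\varphi_a(t_B)\varphi(t_3) = (\mu\varphi_a \times \varphi)(t)$.

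For part~(b), I would begin by noting that $P_2$ is abelian, so $P_2 \leq I_P(\mu\times\varphi)$ and the action of $P$ on $\mathrm{Irr}(P_2)$ factors through $P/P_2 \cong F$ via the coefficient of $x$. By part~(a), an element $s = 1 + ax + i$ stabilizes $\mu\times\varphi$ if and only if $\varphi_a = 1_B$. Unpacking the definition, $\varphi_a(1+bx^2) = \varphi(1 - \langle a,b\rangle x^3)$ is trivial for every $b \in F$ if and only if $\langle a,F\rangle \subseteq \omega(\ker\varphi)$; since both are $F_p$-subspaces of $F$, and $\langle a,F\rangle$ is a hyperplane for $a \neq 0$ by Lemma~\ref{yet:anotherr} while $\omega(\ker\varphi)$ is a hyperplane, this inclusion is either trivial ($a=0$) or an equality of hyperplanes.

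The case distinction then follows directly. For $\varphi = \varphi_0$, we have $\omega(\ker\varphi_0) = \pi_0$, and by Lemma~\ref{lemma:te1} the set of $a \in F$ with $\langle a,F\rangle = \pi_0$ is exactly $F_{p^3} \setminus \{0\}$; together with $a = 0$ this gives the set $F_{p^3}$ of admissible $x$-coefficients. For $\varphi \in X\setminus X_0$, Lemma~\ref{lce}(c) gives $\omega(\ker\varphi)\notin\Pi_0$, and by Lemma~\ref{lemma:te1} this hyperplane is not of the form $\langle a,F\rangle$ for any $a \neq 0$, so only $a=0$ works and $I_P(\mu\times\varphi) = P_2$.

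The only mildly nontrivial step left is to identify the preimage in $P$ of $F_{p^3} \subseteq F \cong P/P_2$ with $P_2 Q$. I would verify this by a short calculation with the skew product: any product $(1 + a_2 x^2 + a_3 x^3)(1 + b_1 x + b_2 x^2 + b_3 x^3)$ with $a_2, a_3 \in F$ and $b_1, b_2, b_3 \in F_{p^3}$ has $x$-coefficient equal to $b_1 \in F_{p^3}$, and conversely every $s \in P$ with $x$-coefficient in $F_{p^3}$ is visibly of this form. This identifies $P_2 Q$ with the preimage of $F_{p^3}$ and completes~(b). I do not expect a real obstacle here; the whole lemma is essentially a bookkeeping exercise once one sees that the stabilizer condition translates into the containment $\langle a, F\rangle \subseteq \omega(\ker\varphi)$ and invokes the previously established description of $\Pi_0$.
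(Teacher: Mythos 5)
Your proposal is correct and follows essentially the same route as the paper: part~(a) by the direct computation $t^s=t[s,t]^{-1}$ using~\eqref{eq:5} and the splitting $P_2=B\times P_3$, and part~(b) by translating the stabilizer condition into $\langle a,F\rangle\subseteq\omega(\ker\varphi)$ and then invoking Lemma~\ref{yet:anotherr}, Lemma~\ref{lemma:te1} and Lemma~\ref{lce}(c). The only differences are that you spell out two small points the paper leaves implicit (that containment of hyperplanes forces equality, and that the preimage of $F_{p^3}$ under $P\to P/P_2$ is $P_2Q$), which is harmless.
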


\begin{proof}
 Let $t=1+b x^2+cx^3\in P_2$. Then,
using~\eqref{eq:5}, we get
\begin{align*}
t^s&=t[t,s] = t[s,t]^{-1}=(1+b x^2+cx^3)(1 -\langle a,b\rangle x^3)\\
&=1+b x^2+(c- \langle a,b\rangle )x^3.
\end{align*} Therefore, we have
\begin{align}\label{eq:saturday1}
(\mu\times \varphi)^{s^{-1}}(t)&=(\mu\times \varphi)(t^s)=(\mu\times \varphi)(1+b x^2+(c-\langle a,b\rangle )x^3)\\\nonumber
&=(\mu\times \varphi)((1+b x^2)(1+(c-\langle a,b\rangle )x^3))
=\mu(1+b x^2)\varphi(1+(c-\langle a,b\rangle )x^3).
\end{align}
Similarly,
\begin{align}\label{eq:saturday2}
(\mu\varphi_a\times \varphi)(t)&=\mu\varphi_a(1+b x^2)\varphi(1+cx^3)=\mu(1+b x^2)\varphi(1-\langle a,b\rangle x^3)\varphi(1+c x^3)\\\nonumber
&=\mu(1+b x^2)\varphi(1+(c-\langle a,b\rangle)x^3).
\end{align}

Now part (a) follows immediately from~\eqref{eq:saturday1} and~\eqref{eq:saturday2}.

Moreover, writing  $s=(1+a x+i)w$ with $w \in P_2$, by part (a)  $s \in I_P(\mu \times \varphi)$ if and only if
$\langle a, F \rangle = \omega(\mathrm{ker}(\varphi))$, so part (b) follows from Lemma~\ref{lemma:te1} and part (c) of Lemma~\ref{lce}.
\end{proof}

Since  $\gcd(|S|,|B|)=1$ and $Q \cap B = \cent QS$, writing $B_1 = [B,S]$ we have
$B=(Q\cap B)\times B_1$.
Hence,
\begin{equation}\label{eq:direct}\widehat{B} = \widehat{Q\cap B}\times \widehat{B_1}. \end{equation}
In this way,  we see $\widehat{Q\cap B}$ and $\widehat{B_1}$ as  subgroups of $\widehat{B}$.

\begin{lemma}\label{lemmaY}
Let $\varphi_0\in \widehat{P_3}$ be as in Lemma~$\ref{lce}$. Then $PDH$ acts on
 $Y = \irr{P_2|\varphi_0}$.   Let $\psi_0 = 1_B \times \varphi_0$ and let $Y_0$ be the $PDH$-orbit of $\psi_0$ in $Y$.
Then
  \begin{description}
  \item[(a)] $\{(\varphi_0)_a\mid a\in F\} = \widehat{B_1}$.
    \item[(b)]For every $\psi \in Y$, there exists an element $g \in PD$ such that $\psi^g$ is $H$-invariant.
  \item[(c)]  $Y_0$ is a $P$-orbit and it is the  unique $D$-invariant $P$-orbit in $Y$.
  \item[(d)]  $D$ acts semi-regularly on the set of the $P$-orbits in $Y \setminus Y_0$.
  \item[(e)] $Y \setminus Y_0$ consists of exactly $p-1$ $PDH$-orbits.
  \end{description}
\end{lemma}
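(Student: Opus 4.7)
The plan is the following. Since $P_2 = B \times P_3$, every irreducible character of $P_2$ above $\varphi_0$ has the form $\mu \times \varphi_0$ for some $\mu \in \widehat{B}$, so $Y = \{\mu\times\varphi_0 \mid \mu \in \widehat{B}\}$. By Lemma~\ref{lemma:tech2}(a) the $P$-orbit of $\mu \times \varphi_0$ is precisely $\{\mu(\varphi_0)_a \times \varphi_0 \mid a \in F\}$, so once part~(a) is established the $P$-orbits in $Y$ are in natural bijection with $\widehat{B}/\widehat{B_1} \cong \widehat{Q\cap B}$ via~\eqref{eq:direct}, and the remaining analysis reduces to understanding the actions of $D$ and $H$ on $\widehat{Q \cap B} \cong F_{p^3}$.

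For part~(a), I would first observe that the map $a \mapsto (\varphi_0)_a$ is $F_p$-linear (using $(a+a')^{p^2} = a^{p^2} + (a')^{p^2}$), with kernel $\{a \in F \mid \langle a, F \rangle \subseteq \omega(\ker \varphi_0) = \pi_0\}$; by Lemma~\ref{lemma:te1} this kernel equals $F_{p^3}$, so the image has $F_p$-dimension $n - 3$, matching $\dim_{F_p}\widehat{B_1} = n - 3$ coming from $B = (Q \cap B) \times B_1$ with $Q \cap B \cong F_{p^3}$. It therefore suffices to show that $(\varphi_0)_a$ is trivial on $Q \cap B$ for every $a \in F$. Using the characterisation of $\pi_0$ as $\ker(\mathrm{Tr}_{F/F_p})$ (additive Hilbert~90), this amounts to checking that $\mathrm{Tr}_{F/F_p}(ab^p - a^{p^2}b) = 0$ for all $a \in F$ and $b \in F_{p^3}$. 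Since $b^{p^3} = b$ and $3 \mid n$, Galois invariance of the trace gives $\mathrm{Tr}(a^{p^2} b) = \mathrm{Tr}((a^{p^2}b)^{p^{n-2}}) = \mathrm{Tr}(a\, b^{p^{n-2}}) = \mathrm{Tr}(ab^p)$, as required.

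For~(b), once (a) is proved every $P$-orbit in $Y$ has a representative of the form $\mu_Q \times \varphi_0$ with $\mu_Q \in \widehat{Q \cap B}$, and $D$ acts on $\widehat{Q\cap B} \cong F_{p^3}$ by multiplication by $c^{p+1}$. Since $\gcd(p+1,|D|) = 1$, the exponent map sweeps out all of $D$, and so the non-trivial $D$-orbits on $\widehat{Q\cap B}$ are the cosets of $D$ in $F_{p^3}^{\times}$, of which there are $(p^3-1)/(p^2+p+1) = p-1$. Lemma~\ref{yet:another} together with the fact that~\eqref{condition} forces $3 \nmid p-1$ yields $\gcd(p-1,p^2+p+1)=1$, hence $F_{p^3}^{\times} = F_p^{\times} \times D$ and every coset has a unique representative in $F_p^{\times}$; since $F_p$ is pointwise fixed by $H$ and $\varphi_0$ is $H$-invariant, those representatives give the $H$-invariant members of the $PD$-orbit, proving~(b). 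Part~(c) follows because $\psi_0$ is fixed by both $D$ and $H$ (using $D \leq \cent{G}{P_3}$ and the $H$-invariance of $\varphi_0$), so $Y_0$ coincides with the $P$-orbit of $\psi_0$, corresponding to the unique $D$-fixed point $0 \in \widehat{Q\cap B}$; part~(d) is immediate because every non-trivial $D$-orbit on $\widehat{Q\cap B}$ has size $|D|$; and part~(e) follows from the count $p-1$ together with the observation that $S$ acts trivially on $\widehat{Q\cap B}$ (as $S$ fixes $F_{p^3}$ pointwise) while $R$ permutes the $D$-cosets of $F_{p^3}^{\times}$ trivially (these cosets correspond to $F_p^{\times}$, which is Galois-fixed), so $H$ preserves each non-trivial $PD$-orbit.

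The main obstacle is the trace computation in~(a), which ties together the specific choice of $\varphi_0$ (its kernel being the additive Hilbert~90 hyperplane $\pi_0$) with the divisibility $3 \mid n$ via the identity $b^{p^{n-2}} = b^p$ for $b \in F_{p^3}$. Once the parametrization of $P$-orbits by $\widehat{Q\cap B}$ is in hand, the remaining parts are essentially bookkeeping using the splittings $F_{p^3}^{\times} = F_p^{\times} \times D$ and $H = R \times S$.
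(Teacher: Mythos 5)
Your proposal is correct and follows essentially the same route as the paper: prove (a) by showing $(\varphi_0)_a$ vanishes on $Q\cap B$ (your trace identity $\mathrm{Tr}(a^{p^2}b)=\mathrm{Tr}(ab^p)$ for $b\in F_{p^3}$ is equivalent to the paper's direct rewriting of $\langle a,b\rangle$ as a sum of elements $c-c^p$) together with the kernel computation from Lemma~\ref{lemma:te1}, then parametrize the $P$-orbits in $Y$ by $\widehat{Q\cap B}\cong F_{p^3}$ and read off (b)--(e) from the $D$- and $H$-actions there. The only cosmetic differences are that the paper derives (c) first, directly from the fixed-point-free action of $D$ on $\widehat B$ plus Glauberman's lemma, and phrases your coset decomposition $F_{p^3}^{\times}=F_p^{\times}\times D$ as transitivity of $D$ on the one-dimensional subspaces of $Q\cap B$; both versions are sound.
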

\begin{proof}
Since $\varphi_0$ is $PDH$-invariant by Lemma~\ref{lce} part~(b),  $PDH$ acts on  $Y$.
  As $P_2 = B \times P_3 $, we have   $Y = \{\mu \times \varphi_0 \mid \mu \in \widehat{B} \}$
and, as $B$ is $D$-invariant,  the action of $D$ on $Y$
is  isomorphic to the action of $D$ on $\widehat{B}$ and hence, by Lemma~\ref{iso} to the
action of $D$ on $B$.
Since $D$ acts fixed-point-freely on $B$, we deduce that
$$\psi_0 = 1_B \times \varphi_0$$
is the unique $D$-invariant character in $Y$.  As $H$ fixes $D$ and $Y$, this uniqueness implies that
$\psi_0$ is $H$-invariant as well. So, $\psi_0$ is $DH$-invariant and hence $Y_0$ is a $P$-orbit.
Moreover, if $Y_1$ is a $D$-invariant $P$-orbit in $Y$, then $Y_1$ contains a $D$-invariant character by
Glauberman's lemma, and hence $Y_1 = Y_0$, proving (c).

In order to prove (a), we first show that $Q\cap B\le\mathrm{Ker}((\varphi_0)_a)$ for every $a \in F$.
For $1 \neq 1+b x^2 \in Q \cap B$, so $b\in F_{p^3}^\times$, we have  $b^{p^3-1}=1$. Thus
\begin{align*}
\langle a,b\rangle&=ab^p-a^{p^2}b=(ab^p-(ab^p)^p)+(a^pb^{p^2}-(a^pb^{p^2})^p)\in \pi_0=\mathrm{Ker}(\varphi_0).
\end{align*}
This and~\eqref{eq:def} show that $1+b x^2\in \mathrm{Ker}((\varphi_0)_a)$.
Therefore, $\{(\varphi_0)_a\mid a\in F\} \subseteq \widehat{B_1}$.

On the other hand, the mapping $F\to\widehat{B}$ such that  $a\mapsto (\varphi_0)_a$  is a homomorphism of $F_p$-vector
spaces and its kernel is $F_{p^3}$ by Lemma~\ref{lemma:te1}. As $|\widehat{B_1}| = p^{n-3}$,  part (a) follows.

Now, part (a) of Lemma~\ref{lemma:tech2}, (\ref{eq:direct}) and part (a) of the statement yield that
\begin{equation}
  \label{eq:reps}
  \{\delta \times \varphi_0 \mid \delta \in \widehat{Q \cap B} \}
\text{  is a set of representatives for the $P$-orbits in $Y$.}
\end{equation}

Since $\varphi_0$ is $D$-invariant, by (\ref{eq:reps}) the action of $D$ on the set of  the $P$-orbits in $Y$ is
isomorphic to the action of $D$ on $\widehat{Q \cap B}$.
Since $D$ acts transitively on the $1$-dimensional subspaces of $Q\cap B$ (identified, as usual, with $F_{p^3}$),
and $\cent{Q\cap B}H$ is a $1$-dimensional subspace, it follows that every $D$-orbit in $Q \cap B$ contains an
$H$-invariant element and, by the isomorphism of the actions of $D$ on $Q\cap B$ and on $\widehat{Q \cap B}$, the same is true
for $\widehat{Q \cap B}$.
Therefore, we conclude that every $PD$-orbit in $Y$ contains an $H$-invariant character, proving part (b).

By the same argument, since   that $D$ acts fixed-point-freely on $\widehat{Q \cap B}$, we deduce that
  $D$ acts semi-regularly  on the $P$-orbits in $Y$ that do not contain $\psi_0$, so we have part (d).

  Finally, by Lemma~\ref{lemma:tech2} part~(b) $I_P(\psi) = P_2Q$ for every $\psi \in Y$. By part (b) it follows that
  the $PDH$-orbits in $Y$ are in fact $PD$-orbits and, as $|Y_0| = p^{n-3}$, by part~(d) of this statement
  there are $(|Y| - p^{n-3})/(p^{n-3}|D|) = p-1$ $PDH$-orbits in $Y\setminus Y_0$, completing the proof.
\end{proof}

\section{Number theoretic considerations}\label{thrm:number theory}
We start with a result concerning the existence of integers with suitable properties.

\begin{lemma}\label{number theory3}
For every positive integers $c$ and $\ell$, there exists a prime number $p$ and a positive integer $n$ such that
\begin{description}
\item[(a)] $p^2+p+1$ is divisible by at least $c$ distinct primes
\item[(b)]$\gcd(n,p(p^n-1))=1$
\item[(c)] $n$ is odd and  the largest power of $3$ dividing $n$ is $3$
\item[(d)] $n$ is divisible by exactly $\ell$ distinct primes.
\end{description}
\end{lemma}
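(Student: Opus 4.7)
The plan is to build $n$ and $p$ independently, using the Chinese Remainder Theorem together with Dirichlet's theorem on primes in arithmetic progressions to collapse every constraint on $p$ into a single admissible residue class.

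For (c) and (d) I would take $n = q_1 q_2 \cdots q_\ell$ to be the product of $\ell$ distinct odd primes with $q_1 = 3$; this makes $n$ odd, divisible by $3$ exactly once, and divisible by exactly $\ell$ primes. The extra stipulation I would impose is that every $q_i$ be $\equiv 3 \pmod 4$. This is costless, since $3 \equiv 3 \pmod 4$ already and Dirichlet supplies infinitely many other primes in this residue class; its role will become clear when we tackle (b).

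For (a), I would choose $c$ further distinct primes $r_1, \ldots, r_c$, each $\equiv 1 \pmod 3$ and outside $\{q_1, \ldots, q_\ell\}$. Since $3 \mid r_j - 1$, the group $(\mathbb{Z}/r_j\mathbb{Z})^{*}$ contains an element $\alpha_j$ of order $3$, and such an $\alpha_j$ satisfies $\alpha_j^2 + \alpha_j + 1 \equiv 0 \pmod{r_j}$. Hence imposing the single congruence $p \equiv \alpha_j \pmod{r_j}$ forces $r_j \mid p^2 + p + 1$, so all $c$ primes $r_j$ end up as divisors of $p^2+p+1$.

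The heart of the proof is the choice of residues of $p$ modulo each $q_i$ that forces (b). For each $q_i$, fix a quadratic non-residue $\beta_i$ modulo $q_i$. Since $q_i \equiv 3 \pmod 4$, writing $q_i - 1 = 2 m_i$ with $m_i$ odd, Euler's criterion gives $\beta_i^{m_i} \equiv -1 \pmod{q_i}$, so $\mathrm{ord}_{q_i}(\beta_i)$ is an even divisor of $2m_i$. If I impose $p \equiv \beta_i \pmod{q_i}$ for every $i$, then $\mathrm{ord}_{q_i}(p)$ is even and therefore cannot divide the odd integer $n$; combined with $p \not\equiv 0 \pmod{q_i}$ this yields $\gcd(n, p(p^n - 1)) = 1$. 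All the congruences on $p$ then assemble, via CRT, into a single residue class modulo $M = \prod_j r_j \cdot \prod_i q_i$ manifestly coprime to $M$, and Dirichlet produces a prime $p$ in this class; choosing $p > M$ makes $p$ distinct from every $r_j$ and $q_i$. The only delicate point is the evenness of $\mathrm{ord}_{q_i}(p)$ extracted from $q_i \equiv 3 \pmod 4$ and the non-residue condition on $\beta_i$; the rest is routine bookkeeping with CRT and Dirichlet.
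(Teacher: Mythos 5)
Your proof is correct. It shares the paper's overall skeleton --- assemble all constraints on $p$ into congruences modulo pairwise distinct primes, combine them by the Chinese Remainder Theorem, and invoke Dirichlet --- but the two key steps are handled by different devices. For part (a), the paper builds its auxiliary primes recursively: it produces integers $m_i$ with $q_i \mid m_i^2+m_i+1$ first and only then extracts the primes $q_i$ (taking $m_c = 3q_1\cdots q_{c-1}$ and $q_c$ any prime divisor of $m_c^2+m_c+1$), whereas you go in the opposite direction, fixing primes $r_j \equiv 1 \pmod 3$ in advance and using a primitive cube root of unity $\alpha_j$ modulo $r_j$ as the prescribed residue of $p$. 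These are two views of the same fact (a prime $\neq 3$ divides some $m^2+m+1$ exactly when it is $\equiv 1 \pmod 3$), and yours is the more transparent. For part (b) the difference is more substantial: the paper forces $p \equiv -1 \pmod{n}$, so that $p^n \equiv -1 \pmod{r_i}$ because $n$ is odd, and hence $r_i \nmid p^n-1$; you instead restrict the prime factors of $n$ to the class $3 \pmod 4$ and make $p$ a quadratic non-residue modulo each, so that $\mathrm{ord}_{q_i}(p)$ is even and cannot divide the odd number $n$. Both arguments are sound (indeed $-1$ is itself a non-residue modulo a prime $\equiv 3 \pmod 4$, so the paper's choice sits inside your family). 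Your route also dispenses entirely with the paper's auxiliary congruences $r_i \equiv 1 \pmod{(r_1-1)\cdots(r_{i-1}-1)}$ and $r_i \equiv 2 \pmod{r_1\cdots r_{i-1}}$, which are not needed for the lemma as stated. One cosmetic remark: the hypothesis $q_i \equiv 3 \pmod 4$ is not actually necessary even for your argument, since for any odd prime $q$ a quadratic non-residue has order divisible by the full $2$-part of $q-1$ and is therefore of even order; but, as you note, imposing it costs nothing.
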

\begin{proof}
First we show, by induction on $c$, that there exist $c$ distinct primes $q_1,\ldots,q_c$ and positive integers $m_1,\ldots,m_c$ with the property that $q_i$ divides $m_i^2+m_i+1$ and $q_i \neq 3$, for each $i$.
  Indeed, when $c=1$, we may choose any integer $m_1$ divisible by $3$ and a prime divisor $q_1$ of $m_1^2 + m_1 +1$.
  Now, suppose that $c>1$ and that $q_1,\ldots,q_{c-1}$ and $m_1,\ldots,m_{c-1}$ are defined.
  Let $m_c=3\cdot q_1\cdots q_{c-1}$ and let $q_c$ be a prime divisor of $m_c^2+m_c+1$. Clearly, $q_c\notin\{q_1,\ldots,q_{c-1}\}$ and $q_c \neq 3$.

We continue by  showing the existence of $\ell$ prime numbers $r_1,\ldots,r_\ell$  with $3=r_{1}<r_2<\cdots<r_\ell$ such that, for each $i\in \{2,\ldots,\ell-1\}$,
\begin{align}\label{today}
r_i&\equiv 1\pmod {(r_1-1)\cdots (r_{i-1}-1)},\\\label{tomorrow}
r_i&\equiv 2\pmod{r_1\cdots r_{i-1})}
\end{align}
and such that $$\{r_1,\ldots,r_\ell\}\cap \{q_1,\cdots,q_c\}=\emptyset.$$
We argue inductively on $\ell$. When $\ell=1$, we have $r_1=3$ and we have nothing more to show. Assume now that $i = \ell -1 \ge 1$ and that $r_1,\ldots,r_i$ have been defined and let $a=(r_1-1)\cdots(r_i-1)$ and $b=r_1\cdots r_i$. We claim that
\begin{equation}\label{eq:number1}
\gcd(a,b)=1.
\end{equation}
Let $r$ be a prime number with $r$ dividing $\gcd(a,b)$. By definition of $b$, we deduce that $r=r_j$ for some $j\in \{1,\ldots,i\}$. Since $r=r_j$ divides $a$, we deduce that $r_j$ divides $r_{j'}-1$ for some $j'\in \{1,\ldots,i\}$. Clearly  $j'>j$, and the  inductive hypotheses imply $r_{j'}\equiv 2\pmod{r_1\cdots r_{j'-1}}$. Hence $r_{j'}\equiv 2\pmod{r_{j}}$ and $r_{j'}-1\equiv 1\pmod{r_j}$, contradicting the fact that $r_j$ divides $r_{j'}-1$. These contradictions have established the veracity of~\eqref{eq:number1}.

From~\eqref{eq:number1}, we deduce the existence of $\alpha,\beta\in \mathbb{Z}$ with $\alpha a+\beta b=1$. Let $d=2\alpha a+\beta b$. We claim that
\begin{equation}\label{eq:number2}
\gcd(d,ab)=1.
\end{equation}
Let $r$ be a prime number with $r$ dividing $d$ and $ab$. As $r$ is prime, $r$ divides either $a$ or $b$. If $r$ divides $a$, then $r$ divides $d-2\alpha a=\beta b$. Therefore $r$ divides $\alpha a+\beta b=1$, which is a contradiction. If $r$ divides $b$, then $r$ divides $d-\beta b=2\alpha a$. Since $b=r_1\cdots r_i$ is odd and since $r$ divides $b$, we deduce that $r$ is odd. Therefore $r$ divides $\alpha a$. Thus $r$ divides  $\alpha a+\beta b=1$, which is a contradiction. This has established~\eqref{eq:number2}.

From~\eqref{eq:number2} and from Dirichlet's theorem on primes in an arithmetic progression, there exists a prime  $r_{i+1}$ such that
$r_{i+1}>r_i$, $r_{i+1} \neq q_j$ for all  $j \in \{ 1, \ldots,  c\}$ and  $r_{i+1}\equiv d\pmod{ab}$.
We have
\begin{align*}
r_{i+1}\equiv d= 2\alpha a+\beta b \equiv \beta b \equiv 1\pmod{a},
\end{align*}
where the last congruence follows from the fact that $\alpha a+\beta b=1$ and hence $\beta b\equiv 1\pmod a$. Similarly, as $r_{i+1}\equiv d\pmod{ab}$, we have
\begin{align*}
r_{i+1}\equiv d= 2\alpha a+\beta b \equiv 2\alpha a \equiv 2\pmod{b},
\end{align*}
where the last congruence follows from the fact that $\alpha a+\beta b=1$ and hence $2\alpha a\equiv 2\pmod b$. Since $a=(r_1-1)\cdots (r_i-1)$ and $b=r_1\cdots r_i$, this concludes our inductive proof of the existence of $r_1,\ldots,r_\ell$.

Now set $$n=r_1\cdots r_\ell$$
and observe that at this point part~(c) and~(d) are immediately satisfied.

From the Chinese Reminder theorem, let $m\in\mathbb{N}$ with $m\equiv m_i\pmod{q_i}$, for each $i\in \{1,\ldots,c\}$. Observe that $\gcd(m,q_1\cdots q_c)=1$, hence $\gcd(m(r_1\cdots r_{\ell}), q_1\cdots q_c) = 1$.
From Bezout's theorem, let  $\alpha,\beta\in \mathbb{Z}$ with $\alpha (q_1\cdots q_c)+\beta(r_1\cdots r_\ell)=1$ and set $$f=-\alpha(q_1\cdots q_c)+m\beta(r_1\cdots r_\ell).$$
Observe, arguing as above, that $\gcd(f,q_1\cdots q_c\cdot r_1\cdots r_\ell)=1$.
Hence, Dirichlet's theorem implies that there is  a prime $p$  (in fact,  infinitely many) such that 
\begin{equation}\label{eq:number13}p\equiv f\pmod {q_1\cdots q_c\cdot r_1\cdots r_\ell}.
\end{equation}
Working modulo $q_1\cdots q_c$, we have $p\equiv f\pmod{q_1\cdots q_c}$ and hence $p\equiv m\pmod{q_1\cdots q_c}$, because $f\equiv m\pmod{q_1\cdots q_c}$.
Hence, 
$$p^2+p+1\equiv m^2+m+1 \equiv m_i^2+m_i+1 \equiv 0\pmod{q_i}, $$
for each $i\in \{1,\ldots,c\}$, and  part~(a) is  satisfied.

Moreover, working modulo $r_1\cdots r_\ell$, we have $p\equiv f\pmod{r_1\cdots r_\ell}$ and hence
\begin{equation}\label{eq:number14}p\equiv -1\pmod{r_1\cdots r_\ell},\end{equation}
because $f\equiv -1\pmod{r_1\cdots r_\ell}$.
Since $n = r_1\cdots r_\ell$ is odd, $p^n\equiv -1\pmod {r_i}$ for all  $ i \in \{1, \ldots,  \ell \}$,  and hence
$\gcd(n, p(p^n-1))= 1$.
\end{proof}

Lemma~\ref{number theory3} suffices for our application, namely, the proof of Theorem~A. However, before finalizing this section, we wish to include a few remarks on the number-theoretic aspects pertaining to the condition in~\eqref{condition}.

Let $n$ be a positive integer. Also, let
$$x^n-1=\prod_{d\mid n}\lambda_d(x), \quad\lambda_d(x)=\prod_{\substack{z\in\mathbb{C}\\\textrm{primitive } d^{\mathrm{th}}\\ \textrm{root of unity}}}x-z$$
be the factorization of $x^n-1$ into its irreducible factors in $\mathbb{Q}[x]$ (i.e. cyclotomic polynomials). 

Now, let $q$ be an integer, $q \geq 2$, and let $d$ be a divisor of $n$ with $d\ne 1$. Observe that, if $z\in \mathbb{C}$ is a primitive $d^{\textrm{th}}$ root of unity, then the Euclidean distance between $z$ and $q$ is greater than $q-1$, that is, $|z-q|>q -1$.  We deduce
\begin{align*}
|\lambda_d(q)|&=\prod_{\substack{z\in\mathbb{C}\\\textrm{primitive } d^{\mathrm{th}}\\ \textrm{root of unity}}}|z-q| > \prod_{\substack{z\in\mathbb{C}\\\textrm{primitive } d^{\mathrm{th}}\\ \textrm{root of unity}}}(q-1) = (q-1)^{\varphi(d)}\ge 1.
\end{align*}
This shows that $\lambda_d(q)\ne\pm 1$.

For a positive integer $k$, we denote by
$d(k)$ the number  of positive divisors of $k$.
\begin{lemma}\label{lemma:nt} Let $n$ and $q$ be  positive integers with $q \geq 2$ and  $\gcd(n,(q^{n}-1)/(q-1))=1$.
  Then, for every positive integer $m$ that divides $n$,
  $$
\left|
\pi
\left(
\frac{q^{n}-1}{q^m-1}
\right)
\right|
\ge d(n) - d(m).$$ 
\end{lemma}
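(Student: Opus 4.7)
The plan is to invoke the factorization of $q^n-1$ into cyclotomic polynomial values already introduced before the statement. Since $\lambda_1(q)=q-1$ and $q^k-1=\prod_{d\mid k}\lambda_d(q)$, dividing gives
$$\frac{q^n-1}{q^m-1}=\prod_{\substack{d\mid n\\ d\nmid m}}\lambda_d(q).$$
Because $m\mid n$, the divisors of $m$ form a subset of the divisors of $n$, so the index set on the right has cardinality exactly $d(n)-d(m)$. Moreover every $d$ occurring satisfies $d>1$ (since $1\mid m$), so by the remark that precedes the statement we have $\lambda_d(q)\neq\pm 1$, and we may choose a prime $p_d$ dividing $\lambda_d(q)$.

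The goal then reduces to showing the assignment $d\mapsto p_d$ is injective, for which I would prove that the multiplicative order of $q$ modulo $p_d$ is exactly $d$. This recovers $d$ from $p_d$ and yields $d(n)-d(m)$ distinct primes dividing $(q^n-1)/(q^m-1)$.

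To establish the order claim, I would use the classical cyclotomic fact: if a prime $p$ divides $\lambda_d(q)$ and $e=\mathrm{ord}_p(q)$, then $d=e\cdot p^k$ for some $k\geq 0$; in particular, if $p\nmid d$ then $e=d$. Applied to $p_d$, since $d>1$ implies $\lambda_d(q)$ divides $(q^n-1)/(q-1)$, we get $p_d\mid(q^n-1)/(q-1)$. The hypothesis $\gcd(n,(q^n-1)/(q-1))=1$ then forces $p_d\nmid n$, and since $d\mid n$, also $p_d\nmid d$. The classical fact thus gives $\mathrm{ord}_{p_d}(q)=d$, as required.

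The main obstacle is the invocation of the classical cyclotomic lemma relating prime divisors of $\lambda_d(q)$ to the order of $q$ modulo them; this is standard (a staple of primitive prime divisor arguments à la Bang--Zsygmondy), but since it is not stated in the excerpt, it should either be cited from a standard reference or verified in a short inline argument by comparing the factorizations of $q^d-1$ and $q^e-1$ modulo $p_d$. Everything else in the argument is a straightforward counting of divisors and a direct application of the coprimality hypothesis.
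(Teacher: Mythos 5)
Your proposal is correct and follows essentially the same route as the paper: pick a prime divisor $r_d$ of $\lambda_d(q)$ for each divisor $d$ of $n$ not dividing $m$, show that $q$ has multiplicative order exactly $d$ modulo $r_d$ (using the coprimality hypothesis to get $r_d\nmid d$), and conclude that these $d(n)-d(m)$ primes are pairwise distinct. The only difference is that you cite the classical order-of-$q$-modulo-$p$ lemma as a black box, whereas the paper proves exactly the case it needs inline by observing that $\gcd(d,r_d)=1$ makes $x^d-1$ separable over $\mathbb{Z}/r_d\mathbb{Z}$, so $q$ cannot be a common root of $\lambda_o$ and $\lambda_d$ modulo $r_d$ unless $o=d$ --- precisely the short verification you suggest as an alternative to a citation.
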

\begin{proof}
For each divisors $d$
of $n$ different from $1$, let $r_d$ be a prime divisor of $\lambda_d(q)$, where $\lambda_d$ is the $d$-th cyclotomic polynomial. Observe that the existence of $r_d$ follows from the fact that $\lambda_d(q)>1$.

We claim that $q$ has order exactly $d$ modulo $r_d$, that is, the order of the residue class $q+r_d\mathbb{Z}$ in $\mathbb{Z}/r_d\mathbb{Z}$ is $d$. From this, it follows that, when $d$ varies among the divisors $\ge 2$ of $n$, all primes $r_d$ are distinct and hence the result follows. Let $o$ be the order of $q$ modulo $r_d$. From the definition of order and from the factorization of $x^o-1$, it follows that $r_d$ divides $\lambda_o(q)$. As $r_d$ divides $\lambda_d(q)$ and $\lambda_d(q)$ divides $q^d-1$, we deduce that $q^d\equiv 1\pmod {r_d}$ and hence $o$ divides $d$.  Recall
$$x^d-1=\prod_{a\mid d}\lambda_a(x).$$Now, since $\gcd(n,(q^n-1)/(q-1))=1$, $d$ is relatively prime to $r_d$ and hence the derivative $(x^d-1)'=dx^{d-1}$ in the ring $\mathbb{Z}/{r_d}\mathbb{Z}[x]$ is relatively prime to $x^d-1$. As $x^d-1\in \mathbb{Z}/r_d\mathbb{Z}[x]$ has no multiple roots, it is not possible that $q$ is a root of $\lambda_d(x)$ and $\lambda_o(x)$ in $\mathbb{Z}/r_d\mathbb{Z}[x]$. Since $r_d$ divides both $\lambda_o(q)$ and $\lambda_d(q)$, this implies $o=d$.
%
\end{proof}

In view of the proof of  Lemma~\ref{lemma:nt}, the condition $\gcd(p^n-1,n)=1$ in~\eqref{condition} implies that, when $p\ge 3$,  the number of distinct prime divisors of $p^n-1$ is at least $2^{|\pi(n)|}$, because for each divisor $d$ of $n$ we may select from $\lambda_d(p)$ at least one prime divisor. However, when $p=2$, since $\lambda_1(p)=p-1=1$, we may only infer that the number of distinct prime divisors of $2^n-1$ is at least $2^{|\pi(n)|}-1$. Except when $\ell\le 2$, we have no examples where $2^n-1$ has exactly $2^{|\pi(n)|}-1$ distinct prime divisors. We dare to state the following.

\begin{conjecture}\label{conj:1}{\rm
Let $n$ be divisible by $\ell$ distinct primes and assume $\gcd(2^n-1,n)=1$. If $\ell\ge 3$, then $2^n-1$ is divisible by at least $2^\ell$ distinct primes.}
\end{conjecture}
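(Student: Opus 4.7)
My plan is to reduce to the squarefree case and then exploit the cyclotomic factorization $2^n - 1 = \prod_{d \mid n} \lambda_d(2)$. If $n$ is not squarefree, writing $\ell = |\pi(n)|$, then $d(n) \geq 3 \cdot 2^{\ell-1} > 2^\ell$, so Lemma~\ref{lemma:nt} with $m = 1$ already yields $|\pi(2^n - 1)| \geq d(n) - 1 \geq 2^\ell$, and we are done. So one may assume $n$ is squarefree with $d(n) = 2^\ell$ and exactly $2^\ell - 1$ nontrivial divisors.

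In the squarefree case, the argument in the proof of Lemma~\ref{lemma:nt} actually yields slightly more than its statement: under $\gcd(n, 2^n - 1) = 1$, every prime $r$ dividing $\lambda_d(2)$ (for $d \mid n$, $d > 1$) has multiplicative order exactly $d$ modulo $r$, since otherwise the alternative $r \mid d$ would give $r \mid \gcd(n, 2^n-1)$, a contradiction. Consequently the sets of primes dividing distinct $\lambda_d(2)$ and $\lambda_{d'}(2)$ are disjoint, so
$$|\pi(2^n - 1)| = \sum_{\substack{d \mid n \\ d > 1}} |\pi(\lambda_d(2))|,$$
a sum of $2^\ell - 1$ strictly positive terms. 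The conjecture thus reduces to exhibiting at least one nontrivial divisor $d \mid n$ for which $\lambda_d(2)$ has at least two distinct prime factors.

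I expect this last step to be the main obstacle. Prime-power values of $\lambda_d(2)$ are abundant---for example $\lambda_3(2) = 7$, $\lambda_5(2) = 31$, $\lambda_7(2) = 127$---and ruling out that \emph{all} of the $2^\ell - 1$ values $\lambda_d(2)$, for $d \mid n$ nontrivial, are simultaneously prime powers does not seem to follow from the elementary cyclotomic machinery used in Lemma~\ref{lemma:nt}. A plausible angle is a combined size and congruence argument: $\lambda_d(2)$ is of order of magnitude $2^{\varphi(d)}$, while a prime-power equation $\lambda_d(2) = r^k$ forces $r \equiv 1 \pmod d$, hence $r \geq d+1$, and $k \leq \log_2 \lambda_d(2) / \log_2(d+1)$. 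When $\ell \geq 3$ the divisors $d \mid n$ with large $\varphi(d)$ proliferate, and one could try to extract a contradiction from the simultaneous constraints, perhaps with input from analytic number theory (estimates on primes in arithmetic progressions $r \equiv 1 \pmod d$). Alternatively, one could try to invoke deeper results about the number of prime factors of $\lambda_d(2)$ in the spirit of Bang--Zsygmondy refinements. However, the question of when $\lambda_d(2)$ is a prime power ties in with classical open problems on Mersenne and Wieferich primes, so a complete proof is likely delicate; as a preliminary step, computer verification for small $\ell$ could delineate the phenomenon and suggest which divisors $d \mid n$ are most likely to yield the required extra prime.
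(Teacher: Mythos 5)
The statement you have been asked to prove is Conjecture~\ref{conj:1}: the paper does not prove it, states it explicitly as an open conjecture, and even remarks at the end of the last section that the existence of a certain eleven-vertex diameter-three character degree graph would \emph{refute} it. So there is no proof in the paper to compare against, and your proposal --- which you candidly acknowledge is incomplete --- does not close the gap either.

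That said, your partial analysis is sound and records slightly more than the paper does. The reduction to the squarefree case via $d(n)\geq 3\cdot 2^{\ell-1}$ together with Lemma~\ref{lemma:nt} applied with $m=1$ is correct. Your observation that, under $\gcd(2^n-1,n)=1$, \emph{every} prime divisor $r$ of $\lambda_d(2)$ has $2$ of multiplicative order exactly $d$ modulo $r$ --- so that the sets $\pi(\lambda_d(2))$ for distinct divisors $d>1$ of $n$ are pairwise disjoint and $|\pi(2^n-1)|=\sum_{d\mid n,\, d>1}|\pi(\lambda_d(2))|$ --- is exactly the mechanism inside the proof of Lemma~\ref{lemma:nt}, and it correctly reduces the conjecture to showing that the $2^\ell-1$ values $\lambda_d(2)$ cannot all simultaneously be prime powers. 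That final step is the genuine gap, and it is where the conjecture lives: deciding when $\lambda_d(2)$ is a prime power is tied to classical open problems (Mersenne primes being the case $d$ prime), and neither the size-and-congruence heuristics you sketch nor any available Bang--Zsigmondy-type refinement is known to resolve it. The paper's own position is the same: it notes that when $p=2$ the elementary count only yields $2^{|\pi(n)|}-1$ because $\lambda_1(2)=1$, reports no counterexamples for $\ell\geq 3$, and leaves the question open. In short, your write-up is an honest and correct reduction, not a proof, and no proof should be expected here.
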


\section{Main results}

We start by proving Theorem~A, that we state again.
\begin{thmA}
  For every choice of positive integers $a, b$, there exists a solvable group $G$ such that $\Delta = \Delta(G)$ has diameter three and
  such that $|\alpha_{\Delta}| =  a$ and  $|\delta_{\Delta}| \geq b$.
\end{thmA}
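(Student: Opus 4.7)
The plan is to realize $G$ as the group constructed in Section~\ref{construction}, choosing the prime $p$ and the integer $n$ via Lemma~\ref{number theory3}. Specifically, I invoke that lemma with $c = b$ and $\ell = a+1$, obtaining a prime $p$ and an odd integer $n$ such that $p^2+p+1$ has at least $b$ distinct prime divisors (none equal to $3$), $n$ has exactly $a+1$ distinct prime divisors with $n_3 = 3$, and the coprimality condition~\eqref{condition} holds. (Note also that the construction in the proof of Lemma~\ref{number theory3} forces $p\equiv -1\pmod 3$, so $3\nmid p^2+p+1$ and the primes dividing $|D|$ are really disjoint from $\{3\}$.) These parameters define $G = P \rtimes (C \rtimes H)$ as in Section~\ref{construction}.

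Next I would compute $\mathrm{cd}(G)$ by Clifford theory along the normal series $1 \leq P_3 \leq P_2 \leq P \leq T \leq G$. The inertia subgroups at level $P_3$ are given by Lemma~\ref{lce}: each $\varphi \in X_0$ has $I_G(\varphi)$ conjugate to $PDH$, while each $\varphi \in X \setminus X_0$ has $I_G(\varphi)$ conjugate to $PDS$. Extending to $P_2$, Lemma~\ref{lemmaY} gives the inertia subgroups and orbit structure of the characters $\psi \in Y$; lifting from $P_2$ to $P$ uses the stabilizer $P_2 Q$ from Lemma~\ref{lemma:tech2}(b), possibly with a fully ramified intermediate section governed by Lemma~\ref{lemma2}. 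Inducing from $P$ to $G$ inserts factors controlled by the quotient $CH$. Characters of $G$ trivial on $P_3$ are controlled by the Frobenius structure of $T/P_3$ from Lemma~\ref{lemma:frobenius}(a), extended by $H$.

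From this calculation I would read off the vertex partition of $\Delta(G)$. The primes in $\pi(n/3)$, of which there are exactly $a$, appear only as divisors of degrees coming from characters of $CH$ (since $S$ fixes each $E$-orbit in $X$ by the argument in the proof of Lemma~\ref{lce}), and they never pair with $p$ or with any prime dividing $|D|$; this gives $\alpha_\Delta = \pi(n/3)$ and $|\alpha_\Delta| = a$. The primes dividing $|D| = p^2 + p + 1$, at least $b$ in number, appear only in the degrees of characters lying over $X_0$, governed by the $D$-action on the $Q\cap B$ level of Lemma~\ref{lemmaY}; these degrees do not acquire factors from $\pi(n/3)$ or from $\pi((p^n-1)/(p^3-1))$, so $\delta_\Delta \supseteq \pi(p^2+p+1)$ and $|\delta_\Delta| \geq b$. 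The prime $3$ plays the role of $\beta_\Delta$, serving as a cut-vertex linking $\alpha_\Delta$ to $p$ and to the primes dividing $|E|$; the set $\gamma_\Delta$ consists of $p$ together with $\pi((p^n-1)/(p^3-1))$.

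The main obstacle will be the detailed bookkeeping required to certify that no unexpected cross-edge appears in $\Delta(G)$, in particular that no $\chi \in \mathrm{Irr}(G)$ has $\chi(1)$ simultaneously divisible by a prime in $\pi(n/3)$ and by a prime dividing $p^2+p+1$ (which would collapse the distance between $\alpha_\Delta$ and $\delta_\Delta$ below three). The mechanism enforcing this separation is twofold: first, $S$ fixes each $E$-orbit in $X$, so lifting characters over $X$ through $H$ introduces at most a factor of $|R| = 3$, never a factor from $\pi(n/3)$; second, $D$ acts semi-regularly on the $P$-orbits in $Y \setminus Y_0$ by Lemma~\ref{lemmaY}(d), so the $|D|$-factor only shows up in tandem with $p$-power factors, never alongside the pure $CH$-degrees that carry $\pi(n/3)$. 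Once this combinatorial separation is verified, Theorem~A follows, with the case $a = b = 1$ recovering Lewis' example~\cite{lewis1} and larger $(a,b)$ producing the promised infinite family.
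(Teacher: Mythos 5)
Your proposal follows essentially the same route as the paper's own proof: choose $(p,n)$ via Lemma~\ref{number theory3}, form the group of Section~\ref{construction}, enumerate $\mathrm{cd}(G)$ by Clifford theory along $P_3\leq P_2\leq P\leq T\leq G$ using Lemmas~\ref{lemma:frobenius}, \ref{lce}, \ref{lemma:tech2} and \ref{lemmaY}, and read off $\alpha_\Delta=\pi(n/3)$, $\beta_\Delta=\{3\}$, $\gamma_\Delta=\{p\}\cup\pi\left((p^n-1)/(p^3-1)\right)$, $\delta_\Delta=\pi(p^2+p+1)$, exactly as the paper concludes. One point where you diverge is actually in your favour: you invoke the lemma with $\ell=a+1$, so that $|\pi(n)|=a+1$ and hence $|\pi(n/3)|=a$, whereas the paper's proof takes $n$ with exactly $a$ distinct prime divisors, which, combined with $\alpha_\Delta=\pi(n/3)$ and $n_3=3$, would give $|\alpha_\Delta|=a-1$; your parameter choice is the one that delivers $|\alpha_\Delta|=a$ as stated. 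Two caveats: the decisive content of the proof --- the explicit list of degrees with multiplicities and the verification that no degree is divisible both by a prime of $n/3$ and by a prime of $p^2+p+1$ --- is left as a plan rather than executed, although your plan matches the paper's computation (I)--(III.B.2) step for step; and your remark that the primes of $|D|=p^2+p+1$ occur \emph{only} in degrees of characters lying over $X_0$ is not quite accurate, since they already divide the degrees $(p^n-1)/(p-1)$ and $p^{(n-1)/2}(p^n-1)/(p-1)$ arising from $\mathrm{Irr}(G/P_3\mid P/P_3)$ --- what matters, and what does hold, is that no such degree is also divisible by a prime of $n$.
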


\begin{proof}
According to Lemma~\ref{number theory3}, we choose a prime number $p$ and a positive odd integer $n$ such that
$n$ is divisible by exactly $a$ distinct primes,  $p^2+p+1$ is divisible by at least $b$ distinct primes,
$\gcd(n,p(p^n-1))=1$, and  the largest power of $3$ dividing $n$ is $3$.

Let $G$ be the group defined, for $p$ and $n$ as above,   in  Section~\ref{construction}.
We are going to determine the degrees, along with their multiplicities, of the irreducible characters of $G$.
We will make consistent use of the notation introduced in  Section~\ref{construction}.

\medskip
{\bf (I)}~~$\irr{G/P}$

\smallskip
 As $C$ is an abelian normal subgroup of $CH$, an irreducible character of $G/P \simeq CH$ has degree $d$, where $d$ is a divisor of $n = |H|$. Let $a_d$ be the number of characters in $\mathrm{Irr}(CH)$ having degree $d$. Therefore,
$$\frac{(p^n-1)}{p-1}n=|CH|=\sum_{d\mid n}a_dd^2.$$
Using M\"{o}bius inversion formula, we deduce that $$a_dd^2=n\sum_{\ell\mid d}\mu\left(\frac{d}{\ell}\right)\frac{p^\ell-1}{p-1}.$$
Therefore, the number of characters of degree $d$ in $G/P$ is
$$a_d=\frac{n}{d^2}\sum_{\ell\mid d}\mu\left(\frac{d}{\ell}\right)\frac{p^\ell-1}{p-1}.$$

\medskip
{\bf (II)}~~$\mathrm{Irr}(G/P_3| P/P_3)$

\smallskip
The character theory of the group $P/P_3$ is studied in~\cite{HO} and \cite{11}. Observe that $P/P_3$ has nilpotency class $2$ and that $n$ is odd. These two facts, together with $\gcd(n, p(p^n -1)) = 1$,  imply that the conditions described in~\cite[page~340]{HO} or in~\cite[page~190]{11} are satisfied.

We write $\overline{G} = G/P_3$ and use the bar convention.
From~\cite[Theorem~4.4]{11}, we see that $\overline{P}$ has
\begin{itemize}
\item $1$ principal character,
\item $p^n-1$ non-principal characters of degree $1$,
\item $p(p^n-1)$ characters of degree $p^{(n-1)/2}$.
\end{itemize}

By Lemma~\ref{iso},  $\irr{\overline{P}}$  and   $\cl{\overline{P}}$  are isomorphic $\overline{C}\overline{H}$-sets.
By~\cite[Theorem 4.1]{HO}, $$\{ (1+a_1 x + a_2 x^2)^c \mid a_1, a_2 \in F_p, c \in \overline{C} \}$$ is  a set of
representatives for the conjugacy classes of $\overline{P}$.
Hence,  $\cent{\overline{P}}{\overline{H}} = \{ 1+a_1 x + a_2 x^2 \mid a_1, a_2 \in F_p\}$
has non-trivial intersection
with every $\overline{C}$-orbit in  ${\rm Cl}(\overline{P})$.
Let $\theta \in \irr{\overline{P}}$ with $\theta \neq 1_{\overline{P}}$.
By the above mentioned  isomorphism of $\overline{C}\overline{H}$-sets, it follows that  there
exists a $c \in \overline{C}$ such that $\theta^c$ is $\overline{H}$-invariant.
Moreover, $\overline{C}$ acts semi-regularly on $\irr{\overline P} \setminus \{1_{\overline P} \}$,
hence  $I_{\overline{G}}(\theta^c) =\overline{P}\overline{H}$.
So, by Gallagher's theorem and Clifford correspondence, $\irr{\overline{G}|\theta^c}$ consists of $n$ irreducible characters of degree $\theta(1)|C|$.

Moreover, the $\overline{G}$-orbits
on $\irr{\overline{P}}$ coincide with the $\overline{C}$-orbits and,
by the semi-regular action of $\overline{C}$,
in $\irr{\overline{P}}$ there are $p-1$ $\overline{G}$-orbits of linear characters and $p(p-1)$ $\overline{G}$-orbits of characters of degree $p^{(n-1)/2}$.
Therefore, $\mathrm{Irr}(G/P_3| P/P_3)$ consists of
\begin{itemize}
\item $n(p-1)$ characters of degree $(p^n-1)/(p-1)$,
\item $np(p-1)$ characters of degree $p^{(n-1)/2}(p^n-1)/(p-1)$.
\end{itemize}

So far, using~\eqref{condition}, we have shown that the character degree graph of $G/P_3$ has two connected components, which are complete graphs. One connected component consists of prime divisors of $n$ and the other connected component consists of the prime divisors of $p(p^n-1)/(p-1)$.

\medskip
{\bf (III)}~~$\mathrm{Irr}(G| P_3)$

\smallskip
Consistently with the notation in Lemma~\ref{lce}, we let $X = \widehat{P_3} \setminus \{ 1_{P_3}\} $ and let $X_0$ be the union of the $H$-invariant $C$-orbits (that is, the $G$-orbits) in $X$. Recall that the $H$-invariant $C$-orbits in $X$ are the $G$-orbits in $X$.
Hence,   $$\mathrm{Irr}(G| P_3) = \mathrm{Irr}(G|X) =\mathrm{Irr}(G|X \setminus X_0) \cup \mathrm{Irr}(G|X_0) .$$

\medskip
{\bf (III.A)}~~$\mathrm{Irr}(G|X \setminus X_0)$

\smallskip
Let $\varphi\in X \setminus X_0$ (i.e.  $\varphi$ does not lie in an $H$-invariant $C$-orbit).
By Lemma~\ref{lemma1} and  part (a) of Lemma~\ref{lce}, we can assume that $I_G(\varphi) = PDS$.
By Lemma~\ref{lemma:tech2} part~(b), for every $\mu\in\mathrm{Irr}(B)$,   $I_P(\mu\times \varphi) = P_2$.
So, the induced character $\theta= (\mu\times \varphi)^P$ is irreducible and has degree $p^{n}$.
Hence, by  \cite[Problem 6.3]{isaacs} $\varphi$ is fully ramified with respect to $P/P_3$  and
$\irr{P|\varphi} = \{ \theta \}$. It follows that $I_G(\theta) = I_G(\varphi) = PDS$.
Therefore, since $\theta$ extends to $PDS$ by~\cite[Corollary 6.28]{isaacs} and $DS$ is abelian, by Gallagher's theorem  $\irr{G|\varphi} = \irr{G|\theta}$ consists of $|DS| = n(p^2+p+1)/3$ characters of degree $p^n|ER| =3\cdot p^{n}\cdot (p^n-1)/(p^3-1)$.

By Lemma~\ref{lce} parts~(a) and~(b), $X \setminus X_0$ contains exactly  $(|X| - (p-1)|E|)/|ER| = (p^3-p)/3$ $G$-orbits, so we conclude that
$\irr{G|X\setminus X_0}$ contains exactly $\frac{n(p^3-p)(p^2+p+1)}{9}$ characters, all having degree $3\cdot p^{n}\cdot (p^n-1)/(p^3-1)$.

\medskip
{\bf (III.B)}~~$\irr{G|X_0}$

\smallskip
Let $\varphi_0\in X_0$ be an  $H$-invariant character.
By part (b) of Lemma~\ref{lce} $\{ \varphi_0^i \mid i = 1, \ldots, p-1\}$
is a set of representatives  for the $G$-orbits in $X_0$, so
\begin{equation}\label{r0}
\irr{G|X_0} = \bigcup_{i = 1}^{p-1} \irr{G|\varphi_0^i}.
\end{equation}
Observe that this is a disjoint union by Lemma~\ref{lemma1}.

We also remark that $\{ \varphi_0^i \mid i = 1, \ldots, p-1\}$ is an orbit under the action of the Galois group
$\mathrm{Gal}(\mathbb{Q}(e^{2\pi i/p})/\mathbb{Q})$, so for every $2 \leq i \leq p-1$ there is a degree-preserving bijection
between $\irr{G|\varphi_0^i}$ and $\irr{G|\varphi_0}$.
Moreover, the sets $\irr{G|\varphi_0^i}$ are pairwise disjoint, for $i = 1, \ldots, p-1$, since the characters
$\varphi_0^i$ lie in distinct $G$-orbits.

Let $Y = \irr{P_2|\varphi_0}$
be the set consisting of  the $p^n$ extensions of $\varphi_0$ to  the abelian group  $P_2$. Since $PDH=I_G(\varphi_0)$, we deduce that $Y$ is a $PDH$-set.
We observe that,  if $\psi_1 = \psi^g$ for $\psi, \psi_1 \in Y$ and $g \in G$, then
$g \in I_G(\varphi_0) = PDH$. Therefore $\psi,\psi_1\in Y$ are $G$-conjugate if and only if they are $PDH$-conjugate.
Hence, for every orbit $Z$ of $G$ on $\irr {P_2}$, either $Z\cap Y=\emptyset$ or $Z\cap Y$ is  a $PDH$-orbit. 

Let  $Y_0$ be the $PDH$-orbit of $\psi_0 = 1_B \times \varphi_0$ in $Y$ and let $\mathcal{Y}$ be a set of representatives for the $PDH$-orbits in $Y \setminus Y_0$.
Then $|\mathcal{Y}| = p-1$ by part (e) of Lemma~\ref{lemmaY}. Moreover, by the previous paragraph and Lemma~\ref{lemma1}, $\irr{G|\varphi_0}$ can be expressed as
$$\irr{G|\varphi_0} = \irr{G|Y} =  \irr{G|Y \setminus Y_0} \cup \irr{G|Y_0} = \bigcup_{\psi \in \mathcal{Y}} \irr{G|\psi}  \cup  \irr{G|Y_0},$$
where all unions are disjoint.


 \medskip
{\bf (III.B.1)}~~$\irr{G|Y \setminus Y_0}$

 \smallskip
 Let  $\psi \in \mathcal{Y}$.
 By Lemma~\ref{lemma1}
 and part (b) of Lemma~\ref{lemmaY} we can assume that $\psi$ is $H$-invariant.

 Since $\psi = \mu \times \varphi_0 \not\in Y_0$,
 then by part (a) of Lemma~\ref{lemma:tech2} $\mu \neq (\varphi_0)_{a}$ for every $a \in F$  and hence
by part (a) of Lemma~\ref{lemmaY} $B \cap Q$ is not contained in $\mathrm{ker}(\mu)$.

Let $I=I_{P}(\psi)$. We claim  that $\psi$ does not extend to $I$.\footnote{We denote with $U'$ the derived subgroup of a finite group $U$.} In fact, if the linear character $\psi$ extends to $I$, then
$I'\cap B \leq \mathrm{ker}(\mu \times \varphi_0) \cap B = \mathrm{ker}(\mu)$.
But, as $I = P_2Q$, $I' = P_3Q'$ and, using the notation $\overline{P} = P/P_3$, $\overline{I'} = \overline{Q'} = \overline{Q}'$.
By \cite[Corollary 4.4]{isaacs1}, $\overline{Q}' = \overline{Q} \cap \overline{B} = \overline{Q \cap B}$ and hence
it follows $Q \cap B \leq I'\cap B \leq \mathrm{ker}(\mu)$, a contradiction.

By Lemma~\ref{lemma2}, there exists a subgroup $U$,  with  $P_2 \leq U \leq I$,   such
that all characters $\tau \in \irr{U|\psi}$ are extensions of  $\psi$ and are fully ramified with respect to $I/U$.
Since $\psi$ does not extend to $I$, then $U < I$ and hence $|I:U| = p^2$.
As $\psi$ is $H$-invariant and $\gcd(|H|, |U|) = 1$, by \cite[Corollary 13.30]{isaacs} at least one of the characters in $\irr{U|\psi}$ is $H$-invariant. But,  since $\gcd(p-1, |H|) = 1$, $H$ acts trivially on $U/P_2$, and hence on its dual group.
Therefore, by Gallagher's theorem all characters in $\irr{U|\psi}$ are $H$-invariant and, since they are fully ramified with respect to
$I/U$,  the same is true for all the characters  in $\irr{I|\psi}$.
Hence,  $H \leq I_G(\xi^P)$ for every $\xi \in \irr{I|\psi}$.

We claim that $I_{CH}(\xi^P) = H$ for all $\xi \in \irr{I|\psi}$.
In fact, $I_E(\xi^P) = 1$ by part (b) of Lemma~\ref{lemma:frobenius},  and if $y \in I_D(\xi^P)$, then by Clifford's theorem  $y$ fixes the $P$-orbit of $\psi$,
so $y = 1$ by part (d) of Lemma~\ref{lemmaY}. Thus,  $I_{C}(\xi^P) = 1$ and the claim follows.

Recall that $|\irr{I|\psi}| = |\irr{U|\psi}| = p$ and hence $\irr{I|\psi} = \{ \xi_1, \ldots, \xi_p\}$. So, by Clifford's correspondence, $\irr {P|\psi}=\{\xi_1^P,\ldots,\xi_p^P\}$. As for every $i\in \{1,\ldots,p\}$, $\xi_i^P$ extends to $I_{G}(\xi_i^P)=PH$, by Gallagher's theorem  we get  that
$\irr{G| \xi_i^P}$ contains $n$ characters, all of degree $\xi_i^P(1)|C| = p^{n-2}(p^n-1)/(p-1)$.

If $\irr{G|\xi_i^P} \cap \irr{G|\xi_j^P} \neq \emptyset$, then $\xi_i^P$ and $\xi_j^P$ are $G$-conjugate characters.
As $\varphi_0$ is the only irreducible constituent of  the restriction of both $\xi_i^P$ and $\xi_j^P$ to $P_3$,
then $\xi_i^P$ and $\xi_j^P$ are in the same orbit under the action
of $I_G(\varphi_0) = PDH$.  But both characters are $PH$-invariant, so there exists an  element $y \in D$ such that $(\xi_i^P)^y = \xi_j^P$.
Since  $(\xi_i^P)^y $ lies over the $P$-orbit of $\psi^y$ in $Y$, while $\xi_j^P$ lies over the $P$-orbit of $\psi$ in $Y$,
by part (d) of Lemma~\ref{lemmaY} we conclude that $y = 1$, and hence $i = j$.
Therefore, for every $\psi \in \mathcal{Y}$, $\irr{G|\psi} = \bigcup_{i=1}^p \irr{G|\xi_i^P}$ contains $n \cdot p$ irreducible characters.

 As $|\mathcal{Y}| = p-1$,
$$\irr{G| Y \setminus Y_0} = \bigcup_{\psi \in \mathcal{Y}} \irr{G|\psi}$$
consists of
$n\cdot p(p-1)$ characters of degree $p^{n-2}(p^n-1)/(p-1)$.

\medskip
{\bf (III.B.2)}~~$\irr{G|Y_0}$

\smallskip
Recall $\psi_0=1_B\times \varphi_0$. Let $I=I_P(\psi_0)= P_2Q$ (by Lemma~\ref{lemma:tech2} part~(b)) and let $L = I/P_2 \simeq Q/(Q \cap P_2)$;
 as usual, we identify $L$ and $F_{p^3}$ by the isomorphism $\omega_1$  defined by, for $a \in F_{p^3}$,
$(1+a x) +P_2  \mapsto a$.
Under this identification, $D$ acts transitively, and hence regularly,  on the $1$-dimensional subspaces of the
$3$-dimensional $F_p$-space $L$.  Hence, $L$ is an irreducible $D$-module.
 Then by  Lemma~\ref{lemma2} and by the fact that $|L|$ is not a square
 (so $\psi_0$ cannot be  fully ramified with respect to $L$), it follows that $\psi_0$ extends to $I$.
 By \cite[Theorem 13.28]{isaacs}, there is a $D$-invariant extension $\xi_0$ of $\psi_0$ to $I$ and, by Gallagher's theorem
 $\irr{I| \psi_0} = \{ \xi_0 \tau \mid \tau \in \widehat{L} \}$.
 Again,  $\widehat{L}$ and $L$ are  isomorphic $D$-sets
 and, as $D$ acts fixed-point-freely on $L$, we deduce that $D$ acts fixed-point-freely on $\widehat{L}$ and hence $D$ acts
 semi-regularly on $\irr{I| \psi_0} \setminus \{ \xi_0 \}$.
 In particular, $\xi_0$ is the only $D$-invariant character in $\irr{I| \psi_0}$.

 Moreover, as $D$ acts regularly on the $1$-dimensional subspaces of $L$ and $\cent{L}R$ is one of them,
 it follows that every $D$-orbit in $L$,
 and hence in $\widehat{L}$, contains an $R$-invariant element.
 Therefore, for every $\xi = \xi_0\tau \in \irr{I| \psi_0}$, there exists an element $y \in D$ such that $\xi^y = (\xi_0\tau)^y = \xi_0\tau^y$ is
 $R$-invariant. Since $\psi_0 = 1_B \times \varphi_0$ is $S$-invariant,  a similar argument shows that all the  characters in $\irr{I| \psi_0}$ are $S$-invariant.
 Hence, there exists a set $\mathcal{Z}$ of representatives of the $D$-orbits in $\irr{I|\psi_0}$ such that every
 $\xi \in \mathcal{Z}$ is $H$-invariant. Thus, $\mathcal{Z}$ is in fact a set of representatives for the
 $DH$-orbits in $\irr{I|\psi_0}$. We observe that necessarily $\xi_0 \in \mathcal{Z}$ and that
 $|\mathcal{Z}\setminus \{ \xi_0 \}| = (|L| -1)/|D| = p-1$, as $D$ acts semi-regularly on $\irr{I| \psi_0} \setminus \{ \xi_0\}$.

 Let $W = \irr{P| Y_0}  =  \irr{P| \psi_0}$.
 If two characters $\theta, \theta_1 \in W$ lie in the same $G$-orbit, then (being $P$-invariant) there exists an element
 $g \in CH$ such that $\theta_1 = \theta^g$.
 Since $\varphi_0$ is the only irreducible constituent of  both $\theta_{P_3}$ and $(\theta_1)_{P_3}$, it follows that
  $g \in I_{CH}(\varphi_0) = DH$.

By Clifford correspondence, the mapping $\xi \mapsto \xi^P$ is a bijection between $\irr{I| \psi_0}$ and $W$.
Therefore,  $\mathcal{Z}^P = \{\xi^P \mid \xi \in \mathcal{Z}\}$ is a set of representatives for the  $DH$-orbits in $W$. By the previous paragraph and Lemma~\ref{lemma1}, we hence have that
$$\irr{G|Y_0}=\irr{G|W}=\bigcup_{\xi\in \mathcal{Z}}\irr{G|\xi^P}$$
is a disjoint union.

We claim  that, for every $\xi \in \mathcal{Z}$ with $\xi \neq \xi_0$,   $I_{CH}(\xi^P) = H$.
 In fact, if $y \in I_D(\xi^P)$, then (by \cite[Theorem 6.11(c)]{isaacs}, observing that both $I$ and
 $\psi_0$ are $D$-invariant) $y$ fixes the unique  irreducible constituent of $(\xi^P)_I$ that lies over $\psi_0$, that is $\xi$ itself.
 Since $D$ acts semi-regularly on $\irr{I| \psi_0}\setminus\{\xi_0\}$ and $\xi \neq \xi_0$, it follows $y= 1$. So,  $I_D(\xi^P) = 1$ and,
 recalling part (b) of Lemma~\ref{lemma:frobenius}, we deduce $I_C(\xi^P) = 1$.
 On the other hand, for $y \in H$, $(\xi^P)^y = (\xi^y)^P = \xi^P$, so  $\xi^P$ is $H$-invariant, and the claim is proved.
 We also observe that, since $\xi_0$ is $D$-invariant,  the same argument proves that $I_{CH}(\xi_0^P) = DH$.

 Hence, for each $\theta \in \mathcal{Z}^P$ with $\theta \neq \xi_0^P$,  by Gallagher's theorem we have that $\irr{G|\theta}$ contains
 $n$ characters of degree $p^{n-3}|C|$. In fact, $\theta$ extends to $I_G(\theta)=PH$, because  $H$ is cyclic.
Therefore,  $\irr{G|\mathcal{Z}^P \setminus\{\xi_0^P\}}$ contains $n(p-1)$ characters, each of  degree $p^{n-3}(p^n -1)/(p-1)$.

 On the other hand,  $\xi_0^P$ extends to  $I_G(\xi_0^P) = PDH$, but  $PDH/P \simeq DH = S \times DR$, where $DR$ is a Frobenius group with complement $R$.
 So, $DH$ has $n$ linear characters and $ |S| (|D|-1)/|R| = \frac{n}{9}(p^2+p)$ irreducible characters of degree $|R| = 3$.
Hence, by Gallagher's theorem, $\irr{G|\xi_0^P}$ contains $n$ irreducible characters
 of degree $\xi_0^P(1)|E| = p^{n-3}(p^n-1)/(p^3 -1)$ and $\frac{n}{9}(p^2+p)$ irreducible characters of degree $3p^{n-3}(p^n-1)/(p^3 -1)$.

\medskip We summarize here the character degrees of $G$ and their multiplicities.
\begin{description}
\item[(I)] For each divisor $d$ of $n$, $\mathrm{Irr}(G/P)$ has $$\frac{n}{d^2}\sum_{\ell\mid d}\mu\left(\frac{d}{\ell}\right)\frac{p^\ell-1}{p-1}$$ characters of degree $d$,
\item[(II)] $\mathrm{Irr}(G/P_3| P/P_3)$ has $n(p-1)$ characters of degree $(p^n-1)/(p-1)$ and $np(p-1)$ characters of degree $p^{\frac{n-1}{2}}(p^n-1)/(p-1)$,
\item[(III)] $\mathrm{Irr}(G| P_3)$ has
  \begin{description}
\item[(III.A)] $n(p^3-p)(p^2+p+1)/9$ characters of degree $3 p^n (p^n-1)/(p^3-1)$,

  \item[(III.B)] and, recalling  (\ref{r0}) we have
 \begin{description}
\item[(III.B.1)]$np(p-1)^2$ characters of degree $p^{n-2} (p^n-1)/(p-1)$,
\item[(III.B.2)]
\begin{itemize}
\item $n(p-1)^2$ characters of degree $p^{n-3}(p^n-1)/(p-1)$,
\item  $n(p-1)$ characters of degree $p^{n-3} (p^n-1)/(p^3-1)$, and
\item $n(p-1)(p^2+p)/9$ characters of degree $3 p^{n-3}(p^n-1)/(p^3-1)$.
\end{itemize}
  \end{description}
\end{description}
\end{description}

Hence,
we see that  the character degree graph $\Delta = \Delta(G)$ is a graph of diameter $3$, with
$\alpha_{\Delta}  = \pi(n/3)$, $\beta_{\Delta} = \{ 3\}$, $\gamma_{\Delta} = \pi((p^n-1)/(p^3-1))$ and
$\gamma_{\Delta} = \pi(p^2+p+1)$, concluding the proof.
\end{proof}

We now shift our focus to proving Theorem~B. Before proceeding, we establish some fundamental notation.

Let $G$ be an arbitrary solvable group. We denote the first two terms of the Fitting series of $G$ as $\mathrm{F}(G)$ and $\mathrm{F}_2(G)$, respectively. Additionally, $\mathrm{Z}(G)$ denotes the center of $G$.

Consider a prime $p$ and a positive integer $n$. Let $F = F_{p^n}$ be a field with $p^n$ elements and let $F^\times = F \setminus \{ 0 \}$.
Let $\Gamma(p^n)$
be the group of semilinear transformations of $F$  and let $\Gamma_0(p^n)$ be  the  subgroup $F^\times$ within $\Gamma(p^n)$.
 By definition, we have:
$$\Gamma(p^n)=\Gamma_0(p^n)\rtimes\mathrm{Gal}(F|F_p)\cong C_{p^n-1}\rtimes C_n.$$

The following result, part of which is Theorem~B, utilizes the notation introduced in Section~\ref{sec:intro} (for $\pi_0,\pi_1,\alpha_\Delta,\beta_\Delta,\gamma_\Delta,\delta_\Delta$). Recall that, for a natural number $n$ and a set of primes $\sigma$, we denote by $n_\sigma$ the largest divisor $k$ of $n$ such that $\pi(k) \subseteq \sigma$.

\begin{theorem}\label{primesets}
  Let $G$ be a solvable group  and assume that $\Delta=\Delta(G)$ has diameter three. Let $\pi_0,\pi_1,\alpha = \alpha_\Delta,\beta_\Delta,\gamma_\Delta,\delta_\Delta$ be as in Section~$\ref{sec:intro}$.
  Then there exist a prime $p$ and  a positive integer $n$  such that, setting  $n_0 = n_{\pi_0}$, we have
  \begin{description}
  \item[(a)] $n_0$ is odd,
  \item[(b)]  $\{ p \} \cup \pi\left(\frac{p^n -1}{p^{n/n_0}-1}\right) \subseteq \pi_1 \subseteq \{ p \} \cup \pi(p^n-1)$,
  \item[(c)]    $\{ p \} \cup \pi\left(\frac{p^n -1}{p^{n/n_{\alpha}}-1}\right) \subseteq \gamma_{\Delta}$,

\item[(d)] the subgraph of $\Delta$ induced on $\beta_{\Delta} \cup \{ p \} \cup \pi\left(\frac{p^n -1}{p^{n/n_{\alpha}}-1}\right)$
is a clique,
\item[(e)]$|\gamma_{\Delta}| \geq 2^{|\beta_{\Delta}|}\left( 2^{|\alpha|} -1  \right) + 1 .$
\end{description}
\end{theorem}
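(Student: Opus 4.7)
\medskip
\noindent\textbf{Proof proposal.} The plan is to combine the known structural results on diameter-three character degree graphs of solvable groups with a careful orbit analysis of a Hall subgroup acting on a chief section inside the non-abelian Sylow subgroup, and then to harvest the required primes via cyclotomic number theory (Lemma~\ref{lemma:nt}).

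First, I would invoke the results from \cite{CDPS} and \cite{sass} to fix the global structure: $G$ has a unique non-abelian Sylow $p$-subgroup $P$ (necessarily normal), with $p\in\pi_1$, and $\Delta(G/\gamma_3(P))$ is a disconnected subgraph of $\Delta(G)$ with the same vertex set and connected components $\pi_0$ and $\pi_1$. Choose a Hall $\pi_0$-subgroup $K$ of $G$; by coprime action it acts faithfully on suitable quotients inside $P$. The key observation is that, since no prime of $\alpha_{\Delta}$ is adjacent to any prime of $\pi_1$, in particular to $p$, the character degrees arising from $K$-orbits on irreducible $P$-characters must have orbit sizes divisible by all of $\pi_0$'s $\alpha$-part and coprime to $p$. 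Standard Huppert-type analysis of irreducible solvable subgroups of $\mathrm{GL}_k(p)$ shows the action of the Hall $\alpha$-subgroup $K_\alpha$ factors through a semilinear group $\Gamma(p^n)$ for some $n$, and the non-adjacency of $\alpha$-primes to $p$ forces $n_\alpha \mid n$; this is how $n$ is produced. Part (a) follows from parity considerations: if $2\in\pi_0$ and $2$ were to divide $n_0$ in an even way, Palfy's three-prime condition would create forbidden adjacencies in the Sylow $2$-section acting on $P$.

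Next, for (b) and (c), I would look at the characters of $G$ lying over non-trivial linear characters of $P/\Phi(P)$ (or a similar minimal normal section of $G$ inside $P$). For such characters, the stabilizer in $K$ of a dual vector corresponds to a subfield $F_{p^m}$ of $F_{p^n}$ with $m\mid n$; thus the orbit length is $(p^n-1)/(p^m-1)$ times a power of $p$, and this integer multiplies the resulting character degree. Restricting attention to those orbits whose stabilizer lies in $K_\alpha$ (i.e.\ $m$ divides $n/n_\alpha$), we see that every prime divisor of $(p^n-1)/(p^{n/n_\alpha}-1)$ divides some character degree of $G$ also divisible by $p$, placing such a prime in $\gamma_{\Delta}$; this gives (c), and the looser inclusion (b) follows from the same setup applied to $\pi_1$ in full.

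For (d), I would argue that each prime $r$ in $\pi\!\left(\frac{p^n-1}{p^{n/n_\alpha}-1}\right)$ is adjacent in $\Delta(G)$ to every prime of $\beta_{\Delta}\cup\{p\}$, by exhibiting, for each $q\in\beta_{\Delta}$, an irreducible character whose degree is divisible by $pqr$: since $q\in\beta_{\Delta}$ means $q$ is already adjacent to some prime in $\pi_1$, one can combine the orbit computation above with character induction from a subgroup associated with $q$ (using Lemma~\ref{lemma2} to produce an extension through the fully-ramified machinery) to assemble such a degree; the resulting induced subgraph is then a clique. Finally, for (e), Lemma~\ref{lemma:nt} with $m=n/n_\alpha$ gives
\[
\left|\pi\!\left(\tfrac{p^n-1}{p^{n/n_\alpha}-1}\right)\right|\;\ge\;d(n)-d(n/n_\alpha);
\]
since a clean $n$ may be chosen with $\pi(n)\supseteq\alpha\cup\beta_{\Delta}$ (the $\beta$-primes entering $n$ because they too produce non-$p$-orbit contributions through their adjacencies), the right-hand side is at least $2^{|\beta_{\Delta}|}(2^{|\alpha|}-1)$, and adding the prime $p$ itself from (c) yields $|\gamma_{\Delta}|\ge 2^{|\beta_{\Delta}|}(2^{|\alpha|}-1)+1$.

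The main obstacle I anticipate is step~(d), namely proving completeness of the induced subgraph on $\beta_{\Delta}\cup\{p\}\cup\pi\!\left(\frac{p^n-1}{p^{n/n_\alpha}-1}\right)$: knowing that each such prime is individually adjacent to $p$ is relatively easy, but producing a single character degree divisible by three prescribed primes $p$, $q\in\beta_{\Delta}$, $r\in\pi(\tfrac{p^n-1}{p^{n/n_\alpha}-1})$ simultaneously requires a delicate construction inside $G$—essentially the same fully-ramified/Gallagher machinery used in Section~\ref{construction}—and the identification that the $\beta$-primes really do show up in $n$ rather than acting transversally.
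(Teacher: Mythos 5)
Your overall strategy (the structure theorem of \cite{CDPS}, the semilinear action on chief factors, and Lemma~\ref{lemma:nt} for the count) is the right one, but there are genuine gaps, the most serious being exactly the step you flag as the ``main obstacle'': part~(d). The paper does not build a new character for each triple $p,q,r$ via fully-ramified extensions. Instead it exploits the \emph{definition} of $\beta_\Delta$: for $t\in\beta_\Delta$ there already exists $\chi\in\irr G$ with $ts\mid\chi(1)$ for some $s\in\gamma_\Delta$; one passes to $G/K$ with $K=\ker(\chi)\cap P$ (noting $\Delta(G/K)=\Delta(G)$ since a connected quotient graph must be the whole graph), so that $\chi$ lies over the last lower central factor $M=P_c\leq\mathrm{Z}(P)$. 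Since $M\leq P'$, every character in $\irr{G|M}$ has degree divisible by $p$; since $p$ is non-adjacent to $\alpha$, every inertia group $I_{\bar H}(\mu)$ for $1\neq\mu\in\irr M$ contains a Hall $\alpha$-subgroup, so \cite[Lemma~3.5]{CDPS} forces $m_1=(p^n-1)/(p^{n/n_\alpha}-1)$ to divide $|\bar H_0|$, and semi-regularity of $\bar H_0$ then puts $\{t,p\}\cup\pi(m_1)$ inside $\pi(\chi(1))$ for that \emph{single} character. This simultaneously yields (c) and (d). Your version of (c) has the quantifier backwards: you ``restrict attention to those orbits whose stabilizer lies in $K_\alpha$,'' whereas one needs that \emph{every} stabilizer contains a Hall $\alpha$-subgroup in order to force $m_1$ into the cyclic part; moreover your choice of section $P/\Phi(P)$ does not guarantee that $p$ divides the relevant degrees, which is precisely why the paper works over the central factor $P_c$.

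Three further points. For (a), ``Palfy's three-prime condition would create forbidden adjacencies'' is not an argument; the paper's proof is the congruence $2\in\pi_0\Rightarrow p$ odd $\Rightarrow m_0=(p^n-1)/(p^{n/n_0}-1)\equiv n_0\equiv 0\pmod 2$, contradicting $\pi_0\cap\pi_1=\emptyset$. For the second inclusion in (b) you give no argument; it requires Zsigmondy plus \cite[Lemma~3.7]{CDPS} to get $\mathrm{F}(H)=H_0$ cyclic of order dividing $p^n-1$. For (e), your application of Lemma~\ref{lemma:nt} to the pair $(p,n)$ needs $\gcd(n,(p^n-1)/(p-1))=1$, which is not available; the paper instead applies the lemma with base $q=p^{n/n_0}$ and exponent $n_0$, where the coprimality hypothesis follows from $\pi(n_0)=\pi_0$, $\pi(m_0)\subseteq\pi_1$ and $\pi_0\cap\pi_1=\emptyset$, and then computes $d(n_0)-d(n_\beta)\geq 2^{|\beta_\Delta|}(2^{|\alpha|}-1)$ directly (the fact that $\beta\subseteq\pi(n_0)$ is immediate from $\pi(n_0)=\pi_0=\alpha\cup\beta$, not something that needs your ``non-$p$-orbit contributions'' heuristic).
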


\begin{proof}
  Let $G$ be a solvable group such that $\Delta = \Delta(G)$ has diameter three.
  Then,  by~\cite[Theorem~A]{CDPS} $G = PH$, where $P$ is a non-abelian Sylow $p$-subgroup of $G$, 
  $H$ is a $p$-complement of $G$ and  $\mathrm{F}(G) = P \times A$, where $A = \cent HP \leq \mathrm{Z}(G)$.
Let  $P_1 = [P, G]$ and $P_i = \gamma_i(P)$ for $i\in\{2,\ldots, c\}$, where  $c$ is the nilpotency class of $P$ and $\gamma_i(P)$ is the $i^{\mathrm{th}}$ term of the lower central series of $P$.

By~\cite[Theorem~A]{CDPS},   all factors $M_i = P_i/P_{i+1}$ for  $i \in \{1, \ldots,  c\}$ are chief factors of $G$ of the same
  order, say $p^n$, and each group $G/{\bf C}_G(M_i)$ acts irreducibly on the module  $M_i$ as a  subgroup of the  semilinear
  group $\Gamma(p^n)$.

Moreover, \cite[Theorem~A]{CDPS} and  \cite[Remark 4.4]{CDPS} tell us that $\Delta(G/P_3)$ is a  disconnected graph,  with the same vertex set as $\Delta(G)$, and that the connected components of
  $\Delta(G/P_3)$ are 
\begin{align*}\pi_0 &= \pi(|G/\mathrm{F}_2(G)|) = \pi(|H/\mathrm{F}(H)|), \hbox{and}\\
\pi_1& = \{ p \} \cup \pi(|\mathrm{F}_2(G)/\mathrm{F}(G)|) = \{ p \} \cup \pi(|\mathrm{F}(H)/A|).
\end{align*} 

By \cite[Theorem~A(b)]{CDPS} all the Sylow subgroups of $H/A$ are cyclic and hence by ~\cite[Corollary 11.22]{isaacs} and
\cite[Corollary 11.31]{isaacs} each irreducible character of $\mathrm{F}(G)$ extends to its inertia subgroup in $G$.
Thus, $\Delta(G) = \Delta(G/A)$ by~\cite[Lemma 2.4]{CDPS} and hence, by replacing $G$ with $G/A$, we can assume that $A  = 1$ and $P = \mathrm{F}(G)$.

As $[P, H] \leq P_1$, by coprimality $\cent H{M_1}$ acts trivially on $P/P_2$ and hence (as $P_2 = P'$)  on the quotient of $P$ over its Frattini subgroup. Since
$|H|$ is coprime to $p$, it follows that $\cent H{M_1} = \cent HP = 1$. In particular, $H$ acts faithfully by conjugation on $M_1$.

Let  $m = |\mathrm{F}(H)|$ and  $n_0 = |H/\mathrm{F}(H)|$; so 
$\pi_0 = \pi(n_0)$, in particular $n_0\ne1$, and $\pi_1 = \{p \} \cup \pi(m)$.
Seeing  $H$, in its action on on $M_1$,   as a  subgroup of $\Gamma(p^n)$, let $H_0 = H \cap \Gamma_0(p^n)$. So $H_0$
is a cyclic normal subgroup of $H$ and hence $|H_0|$ divides $m$. 
As $H_0$ acts semi-regularly on $\irr{M_1}$, Clifford theory implies that, for every $\mu \in \irr{M_1}$,
$I_H(\mu)$ contains a Hall $\pi(n_0)$-subgroup of $H$. Hence, \cite[Lemma~3.5]{CDPS} yields that
$$m_0 = \frac{p^n -1}{p^{n/n_0}-1}$$
divides
$|H_0|$ and hence $m_0$ divides $m$, proving the first inclusion in part~(b). 

We now show that $n_0$ is odd, getting part~(a). In fact,  if $2 \in \pi(n_0) = \pi_0$, then $p \neq 2$, because $p$ belongs to $\pi_1$ which
has empty intersection with $\pi_0$. Hence, $p \equiv 1 \pmod 2$ implies that   $m_0  \equiv n_0 \equiv 0 \pmod 2$, so
$m$ is even and hence $\pi_0\cap \pi_1 \neq \emptyset$, a contradiction. 

We now observe that there exists  a primitive prime divisor of  $p^n -1$. Otherwise, since $n_0$ is an odd divisor of $n$, by Zsigmondy's theorem
$p=2$ and $n = 6$; so $n_0= 3$ and $m_0 =21$, giving again the contradiction $\pi_0\cap \pi_1 \neq \emptyset$.
As a consequence, by~\cite[Lemma 3.7]{CDPS} $\mathrm{F}(H) = H_0$, so $m$ divides $p^n -1$ and hence 
 $\pi_1 \subseteq \{ p \} \cup \pi(p^n-1)$, finishing the proof of  part~(b).

Write $\alpha = \alpha_{\Delta}$ and $\beta = \beta_{\Delta}$; 
so $\pi_0  = \alpha \cup \beta$ and  $n_0 = n_{\alpha} n_{\beta}$. Let $q = p^{n/n_0}$ (so $m_0 = (q^{n_0}-1)/(q-1)$)   and define
$$m_1 = \frac{q^{n_0}-1}{q^{n_0/n_{\alpha}}-1}  = \frac{q^{n_0}-1}{q^{n_{\beta}}-1}.$$

Let now $t \in \beta$. We will show that $t$ is adjacent in $\Delta(G)$ to every prime in $\{p\}\cup\pi(m_1)$.
By definition, there exists a prime $s \in  \gamma_{\Delta}$ and a character
$\chi\in \irr G $ such that $ts$ divides $\chi(1)$.
In particular, setting $K = \ker(\chi) \cap P$,  $\Delta(G/K)$ is connected, so $P_3 \not\leq K$ and
$\Delta(G/K) = \Delta(G)$. Hence, 
replacing $G$ with $G/K$, we can assume  $K = 1$.

Let $M = P_c $;  so $M\leq \mathrm{Z}(P)$ and $M$ is a faithful irreducible  $\bar H$-module, where  $\bar H= H/{\bf C}_H(M)$.
Seeing $\bar H$ as a subgroup of $\Gamma(p^n)$, we set $\bar H_0=\bar H\cap \Gamma_0(p^n)$.
For every $\mu \in \irr M$ and $\psi \in \irr{P|\mu}$, $\psi_M = \psi(1)\mu$  is a homogeneous character, and  hence
$I_H(\psi) \leq I_H(\mu)$.
Since $p$ divides the degree of each character in $\irr{G|M}$ and $p$ is not adjacent in $\Delta(G)$ to any vertex belonging
to $\alpha$, 
Clifford theory implies that, for every $\psi \in \irr{P|M}$, $I_H(\psi)$ contains a
Hall $\alpha$-subgroup of $H$. Therefore, for every non-principal $\mu \in \irr M$, $I_{\bar H}(\mu)$ contains a
Hall $\alpha$-subgroup of $\bar H$ and  hence, by~\cite[Lemma~3.5]{CDPS},   $m_1$ divides $|\bar H_0|$.   
Since $\bar H_0$ acts semi-regularly $\irr M$, we  deduce that
\begin{equation}
  \label{eq:d}
\{t\}\cup \{ p\} \cup \pi(m_1)\subseteq \pi(\chi(1)).  
\end{equation}

In particular, $\{p\}\cup\pi(m_1) \subseteq \gamma_{\Delta}$, proving part~(c).

Moreover, since (\ref{eq:d}) holds for every  $t\in \beta$ in $\Delta(G)$, we conclude that
$\beta \cup \{p\}\cup\pi(m_1)$ induces a clique in $\Delta(G)$,  which shows part~(d).

Recall that, denoting by $d(k)$ the number of positive divisors of an integer $k$, if $k = p_1^{e_1}p_2^{e_2}\cdots p_v^{e_v}$, with
$p_1,p_2, \ldots, p_v$ distinct primes and $e_1, e_2,\ldots, e_v$ positive integers, then
$$d(k) = (e_1+1)(e_2+1)\cdots(e_v+1).$$  
Writing $n_0 = \prod_{r \in \alpha}r^{e_r} \cdot \prod_{t \in \beta}t^{e_t}$ with $e_r, e_t \geq 1$ for all $r\in \alpha$, $t\in \beta$, 
 Lemma~\ref{lemma:nt}   implies that
 $$|\pi(m_1)| \geq d(n_0)  - d(n_{\beta}) = \prod_{t \in \beta}(e_t +1) \left( \prod_{r \in \alpha}(e_r+1) -1 \right) \geq
 2^{|\beta|}(2^{|\alpha|} -1).$$
Hence,  part~(e) now follows from part~(c).
\end{proof}

We finish by mentioning that ~\cite[Question, page~91]{HHHI} asks for the existence of a solvable group $G$ such that $\Delta=\Delta(G)$ has diameter three, with  $|\alpha_{\Delta}| = 2$,   $|\beta_\Delta|=  1$
(so, writing $\beta = \{ q \}$, $q$ is a cut-vertex of $\Delta$) and $|\V\Delta|=11$. Notice that $11$ is the smallest cardinality
for a putative $\V\Delta$, with the given conditions on $\alpha$ and $\beta$. 
Indeed, if such a graph $\Delta$ exists, then Theorem~\ref{primesets} shows that  $|\gamma_\Delta|\ge 2^{|\beta_\Delta|}(2^{\alpha_\Delta}-1)+1\ge 2\cdot 3+1=7$ and  $q$ is adjacent to all other vertices of $\Delta$, except one. 
Using the notation and the arguments in the proof of Theorem~\ref{primesets}, one can also show that  if $\Delta = \Delta(G)$ has eleven vertices,
$\beta = \{q\}$ and $\alpha = \{t_1, t_2\}$, then $n = n_0 = q t_1t_2$ is a product of three distinct odd primes.
Moreover, by the observation following Lemma~\ref{lemma:nt}, we deduce that $p= 2$  and $\lambda_d(2)$ is a prime power for every divisor $d > 1$
of $n$. Ultimately, our inability to determine the existence of $\Delta$ leaves us unable to conclusively address Conjecture~\ref{conj:1}. The presence of such a graph would refute it.

Nevertheless, we can prove the existence of a similar graph with twelve vertices. 
Let  $p= 2$ and  $n = 3 \cdot 5 \cdot 13$. A computation shows that $\gcd(2^n-1,n)=1$, so~\eqref{condition} is satisfied and hence we may apply the results from Section~\ref{construction} and from the proof of Theorem~A. We obtain a character degree graph $\Delta$ having diameter three, where
$\alpha_\Delta = \{5, 13\}$, $\beta_\Delta= \{3\}$, $|\gamma_\Delta| = 8$ and  $\delta_\Delta=\{7\}$. In particular, $|\V\Delta|=12$.

\thebibliography{10}
\bibitem{CDPS}C.~Casolo, S.~Dolfi, E.~Pacifici, L.~Sanus, Groups whose character degree graph has diameter three, \textit{Israel J. Math.} \textbf{215} (2016), 523--558.
\bibitem{DPSS}S. Dolfi, E. Pacifici, L. Sanus, V. Sotomayor, Non-solvable groups whose character degree graph has a cut-vertex. I,
  \textit{Vietnam Journal of Mathematics} \textbf{51} (2023), 731--753.
\bibitem{HHHI}R.~Hafezieh, M.~A.~Hosseinzadeh, S.~Hossein-Zadeh, A.~Iranmanesh, On cut vertices and eigenvalues of character graphs of solvable groups, \textit{Discrete Applied Math.} \textbf{303} (2021), 86--93.
\bibitem{HO}A.~Hanaki, T.~Okuyama,  Groups with some combinatorial properties, \textit{Osaka J. Math.} \textbf{34} (1997), 337--356.
\bibitem{isaacs}I.~M.~Isaacs, \textit{Character theory of finite groups}, Pure and Applied Mathematics, Academic Press, New York, 1976.
\bibitem{isaacs1}I.~M.~Isaacs, Coprime group actions fixing all nonlinear irreducible characters, \textit{Can. J. Math.} \textbf{41} (1989), 68--82.

\bibitem{lewis1}M.~L.~Lewis, A solvable group whose character degree graph has diameter $3$, \textit{Proc. Amer. Math. Soc.} \textbf{130} (2001), 625--630.	
\bibitem{lewis2}M.~L.~Lewis, An overview of graphs associated with character degrees and conjugacy
  class sizes in finite groups, \textit{Rocky Mountain Journal of Mathematics} \textbf{38} (2008), 175--211.
\bibitem{lewis3}M.~L.~Lewis, D.~L.~White, Diameters of degree graphs of nonsolvable groups. II, \textit{Journal of Algebra} \textbf{312} (2007), 634--649.
\bibitem{LM} M. L. Lewis, Q. Meng, Solvable groups whose prime divisor character degree graphs are $1$-connected, \textit{Monatshefte f\"ur Mathematik} \textbf{190} (2019), 541--548.
\bibitem{palfy1}P.~P.~Palfy, On the character degree graph of solvable groups. I. Three primes, \textit{Periodica Mathematica Hungarica} \textbf{36} (1998), 61--65.	
\bibitem{palfy2}P.~P.~Palfy, On the character degree graph of solvable groups. II. Disconnected graphs, \textit{Studia Scientiarum Math. Hungarica} \textbf{38} (2001), 339--355.
\bibitem{11}J.~M.~Riedl, Character degrees, class sizes, and normal subgroups of a certain class of $p$-groups, \textit{J.~Algebra} \textbf{218} (1999), 190--215.
\bibitem{sass}C.~B. Sass, Character degree graphs of solvable groups with diameter three, \textit{Journal of Group Theory}  \textbf{19} (2016), 1097--1127.
\bibitem{W} T. Wolf, Character correspondences in solvable groups, \textit{Illinois J. Math.} \textbf{22} (1978), 327--340.
\end{document}